\documentclass[11pt,reqno,twoside]{amsart}

\usepackage{amsfonts,amsmath,amssymb}
\usepackage{mathrsfs,mathtools}
\usepackage{enumerate}
\usepackage[colorlinks=true, citecolor=black, urlcolor=black, linkcolor=black, pdfborder={0,0,0}]{hyperref}
\usepackage{cleveref}
\usepackage{xcolor}
\usepackage{esint}
\usepackage{graphicx}
\usepackage{bm,enumitem}
\usepackage{commath}
\usepackage{esint}
\usepackage{cases}
\DeclareMathAlphabet{\mathpzc}{OT1}{pzc}{m}{it}
\usepackage[colorlinks=true]{hyperref}
\usepackage[a-1b]{pdfx}
\usepackage{mathtools}
\usepackage[scr=boondoxupr]{mathalpha}
\newcommand*{\Lcorner}{%
    \mathchoice%
        {\mathrel{\makebox[7pt][c]{\rule{.4pt}{7.5pt}\rule{5pt}{.4pt}}}}%
        {\mathrel{\makebox[7pt][c]{\rule{.4pt}{7.5pt}\rule{5pt}{.4pt}}}}%
        {\mathrel{\makebox[5.5pt][c]{\rule{.4pt}{5.25pt}\rule{3.5pt}{.4pt}}}}%
        {\mathrel{\makebox[4pt][c]{\rule{.4pt}{3.75pt}\rule{2.5pt}{.4pt}}}}%
}

\usepackage{placeins} 
\usepackage{flafter} 
\usepackage{mathrsfs}

\usepackage[textsize=small]{todonotes}
\setlength{\marginparwidth}{2.1cm}
\definecolor{darkgreen}{RGB}{0,95,0}

\usepackage{lineno}

\numberwithin{equation}{section}


\hfuzz=4pt
\usepackage[dvips,bottom=1in,right=1in,top=1in, left=1in]{geometry}

\setcounter{secnumdepth}{3} \setcounter{section}{0}
\newtheorem{theorem}{Theorem}[section]
\newtheorem{corollary}[theorem]{Corollary}
\newtheorem{definition}[theorem]{Definition}
\newtheorem{lemma}[theorem]{Lemma}

\newtheorem{proposition}[theorem]{Proposition}

\newtheorem{remark}[theorem]{Remark}
\newtheorem{example}{Example}
\crefformat{equation}{(#2#1#3)}

\renewcommand{\div}{\mathrm{div}}

\usepackage{graphicx}

\thanks{
This work is  supported by NSF grant DMS-2012391}
\thanks{2020 Mathematics Subject Classification. Primary 49Q20; Secondary 52A27.}

\begin{document}

	\title[Sets of measures with divergences and boundary conditions]{{Convergence aspects for sets of measures with divergences and boundary conditions}
	}
	
	\author{Nicholas Chisholm, Carlos N. Rautenberg}
	\address{N. Chisholm, C.N. Rautenberg. Department of Mathematical Sciences and the Center for Mathematics and Artificial Intelligence (CMAI), George Mason University, Fairfax, VA 22030, USA.}
	\email{nchishol@gmu.edu, crautenb@gmu.edu}
	
	\begin{abstract}
		In this paper we study set convergence aspects for Banach spaces of vector-valued measures with divergences (represented by measures or by functions) and applications. We consider a form of normal trace characterization to establish subspaces of measures that directionally vanish in parts of the boundary, and present examples constructed with binary trees. Subsequently we study convex sets with total variation bounds and their convergence properties together with applications to the stability of optimization problems.
	\end{abstract}
	
	\keywords{convex sets, divergence of a measure, Borel measures.}
		
	\maketitle

	\tableofcontents
	
	\section{Introduction}
	
	The purpose of the paper is severalfold and closely tied with applications. { In particular, two major aspects are considered; initially we focus on} (i) The  description and study of subspaces of the space of Borel measures over a subset $\Omega\subset \mathbb{R}^M$ with (measure and functional) divergences that can be characterized as directionally vanishing in parts of the boundary $\partial\Omega$. { Secondly, we approach}  (ii) The study of set convergence aspects of sets of measures whose total variations are bounded by non-negative measures and their application to stability of optimization problems.
	
	The need to represent directional boundary conditions on certain classes of Borel measures arises in the Fenchel dualization of non-dissipative gradient constraints problems; see \cite{antil2022nondiffusive}. The latter class of problems allows to model the growth of sandpiles and granular material flow in a deterministic fashion. In this setting, the region where measures should vanish directionally at the boundary corresponds to the region where material is not allowed to escape the domain.
	
	Optimization problems over spaces of measures and with total variation constraints are relatively scarce in the literature. Notable exceptions can be found in \cite{buttazzo2013shape}, where applications to shape optimization with  { total-variation-norm constraints are}  considered, and in \cite{buttazzo2022mass}, where minimizers to constant total  { variation norm} problems with convex energies are characterized by means of suitable PDEs.
	
	Concerning vector fields with generalized divergences, the seminal work and several extensions were developed by Chen and Frid \cite{chen1999divergence,chen2001theory,chen2003extended}. The authors establish properties of the functional spaces, the Gauss-Green theorem in this setting, and the study of trace type results. Their original motivation is the study of hyperbolic conservation laws. Generalizations of the trace results and the integration by parts theorems were established by  \v{S}ilhav\'{y} \cite{vsilhavy2008divergence} where best possible cases are determined for the normal trace results.
	
	Although the study of set convergence goes back to Painlev\'{e}, see the Painlev\'{e}-Kuratowski set limits in \cite{kuratowski1948topologie}, the appropriate concept for the study of perturbations of constrained optimization problems and variational inequalities in reflexive Banach spaces  was developed by Mosco \cite{mosco1969convergence,mosco1967approximation}. The main object of study of our work is the following set $\mathbf{K}(\alpha;X)$ defined as
	\begin{equation*}
			\mathbf{K}(\alpha; X) := \{ \mu \in X : { |\mu|} \le \alpha \}, 
	\end{equation*}
	 { where the expression $|\mu|\leq \alpha$ stands for the total variation of $\mu$ (see \Cref{sec:prelim} for details) dominated by a non-negative Borel measure  $\alpha$}, and $X$ is a subspace of the Banach space of Borel measures endowed with the total  { variation} norm. In particular, we focus on properties of the map $\alpha\mapsto \mathbf{K}(\alpha; X)$.

	The paper is organized as follows. Initially, we present a formal motivation for the class of spaces that we will study in \Cref{sec:formal}.  In \Cref{sec:prelim} we provide the notation and conventions used throughout the entire paper, in particular we consider the three different topologies on the space of Borel measures that we require in our {  approach}, the strong, narrow, and weak topologies. In \Cref{sec:Mdiv}, we establish the spaces of { vector} measures with divergences that are either represented by measures or functions in some Lebesgue spaces, and present a known trace characterization. Subsequently, in \Cref{sec:BC} and \Cref{sec:BC2} we introduce the subspaces with {  generalized} normal-traces { (understood as the generalization of evaluations at the boundary of  normal components to the boundary)}  vanishing on subsets of the boundary, and the construction of measures by means of binary trees. Order properties and equivalent characterizations thereof needed for the definition of the convex sets of interest are given in \Cref{sec:sets}, and the set convergence results are given in \Cref{sec:M1} and \Cref{sec:M2}.

	\subsection{Formal motivation} \label{sec:formal}
	The Prigozhin mathematical model \cite{MR3082292,MR3231973,Prigozhin1996,PrigozhinSandpile} of cohensionless and granular material growth over a certain flat surface given by $\Omega$ can be formulated as an {  evolving in time} gradient constrained problem without dissipative operators. For boundary conditions, one considers a region of the boundary $\Gamma$ where material is not allowed to leave the domain and $\partial\Omega\setminus\Gamma$  where material is allowed to leave freely. The semi-discretization of the model and the formal determination of the Fenchel dual problem in each time step leads to try to identify a Borel measure $\mu$ in the following  class of minimization problems:
	\begin{equation*} 
			\begin{aligned}
			&\min_{\mu} \quad  \frac{1}{2} \int_{\Omega} \left| \mathrm{div}\, \mu(x) - f(x)\right|^2 \, \mathrm{d} x   
			+ \int_{\Omega} \beta(x) \, \mathrm{d}|\mu|(x)\\
			&\text{ over  the set of Borel measures on }\Omega\\
			&\textrm{ subject to (s.t.)} \quad  \mu\cdot \vec{n}=0 \text{ on } \Gamma  \text{ (in some sense)} \text{ and } |\mu| \le \alpha,
			\end{aligned}
		\end{equation*}
where $f\in L^2(\Omega)$,  $\beta \in C(\overline{\Omega})^+$; see \cite{antil2022nondiffusive}. {  As stated before,} the expression $|\mu|\leq \alpha$ stands for the total variation of $\mu$ dominated by some measure $\alpha$ that is  non-negative. The measure $\alpha$ may arise as a structural constraint, that is, it may be related to a finite element mesh, and hence determined by a linear combination of Dirac deltas (element nodes), Lebesgue one{ -}dimensional measures (element edges), and functions (element areas). While initially the entire formulation of the problem is formal, we show in \Cref{optimize_static} that the problem can be posed rigorously and admits solutions. In addition, \Cref{optimize_dynamic} shows that the problem  is stable with respect to perturbations on $\alpha$ with respect to the total variation norm.

	\section{Notation and Preliminaries} \label{sec:prelim}

	Let $\Omega$ be an open subset of $\mathbb{R}^M$ with $M\in\mathbb{N}$, and $\mathcal{B}(\Omega)$ be the Borel $\sigma$-algebra on $\Omega$. We call elements of $\mathcal{B}(\Omega)$  \textit{Borel sets}. Let $\mathrm{M}(\Omega)$ and $\mathrm{M}(\Omega)^N$ denote the set of all real-valued and $\mathbb{R}^N$-valued measures with $N\in\mathbb{N}$, respectively,  on $\mathcal{B}(\Omega)$. An element $\sigma \in \mathcal{B}(\Omega)$ is \textit{positive} 
	if for every $B \in \mathcal{B}(\Omega)$, we have $\sigma(B)\geq 0$. We 
	use $\mathrm{M}^{+}(\Omega)$ to denote the set of all positive measures 
	on $\mathcal{B}(\Omega)$. The \textit{total variation}
	of a measure $\mu \in \mathrm{M}(\Omega)^N$ is 
	the uniquely defined measure $|\mu| \in \mathrm{M}^{+}(\Omega)$ that satisfies 
	\begin{equation*}
		|\mu|( B ) = 
		\sup \left\{ \sum_{i = 1}^{\infty} |\mu(B_i)|  : 
		B = \bigcup_{i = 1 }^{\infty } B_i \right\} \quad \text{for all} \quad B \in \mathcal{B}(\Omega),
	\end{equation*}
	see \cite{abm14}. Recall that $\mathrm{M}(\Omega )^N$ is a Banach space when endowed with the norm 
	\begin{equation}
		\| \mu \|_{\mathrm{M}(\Omega )^N} := |\mu|(\Omega).
	\end{equation}
	Further, by duality of the set of continuous functions with compact support $C_c( \Omega)$ and $\mathrm{M}(\Omega )$ {(  see Section 2.4 in \cite{abm14})} we observe that 	
	\begin{equation}\label{eq:TVmu}
		|\mu|(\Omega) = \sup 
		\{ \langle \mu , \phi \rangle 
		: \phi \in C_c( \Omega)^N  : | \phi(x) | \le 1 \text { for all } x\in \Omega\}, 
	\end{equation} 
	where the pairing in \eqref{eq:TVmu} between $\mu$ and $\phi$ is given by
	\begin{equation*}
		\langle \mu , \phi \rangle=\int_{\Omega} \phi\cdot  \dif \mu:=\sum_{i=1}^N\int_{\Omega} \phi_i \dif \mu_i,
	\end{equation*}
	with $\phi=\{\phi_i\}_{i=1}^N$ and $\mu=\{\mu_i\}_{i=1}^N$. Note that in the definition of \eqref{eq:TVmu}, one could substitute the space $C_c(\Omega)$ for $C_0(\Omega)$ without changing the value of the supremum where $C_0(\Omega)$ is the space of continuous functions vanishing at the boundary $\partial\Omega$ and equipped with the usual $\phi\mapsto\sup_{x\in\Omega} |\phi(x)|$ norm.

	If  a sequence $\{\mu_n\}$ in $\mathrm{M}(\Omega)^N$ converges to $\mu\in \mathrm{M}(\Omega)^N$ in norm, that is
	\begin{equation*}
		\| \mu_n - \mu \|_{\mathrm{M}(\Omega )^N} \to 0
	\end{equation*}
	as $n\to\infty$, we say that $\{\mu_n\}$ converges \emph{strongly} to $\mu$ and we denote this by 
	\begin{equation*}
		\mu_n \to \mu.
	\end{equation*}
	In addition to the topology induced by the norm $\mu\mapsto |\mu|(\Omega)$, two other topologies on $\mathrm{M}(\Omega )^N$  are of interest: the \textit{narrow} and the \textit{weak} topologies.

	\begin{definition}[\textsc{Narrow, and Weak Convergence in $\mathrm{M}(\Omega )^N$}]\label{def:NW}
	Let $\{\mu_n\}$ be a sequence of measures in 
	$\mathrm{M}(\Omega)^N$ with $\mu\in \mathrm{M}(\Omega)^N$. 
	If for all  $\phi
	\in C_b( \Omega )^N$, where $C_b( \Omega )$ is the set of bounded continuous functions on $\Omega$, we observe 
	\begin{equation*}
		\int_{\Omega} \phi \cdot \mathrm{d} \mu_n 
		\to \int_{\Omega} \phi \cdot \mathrm{d} \mu
	\end{equation*}
	as $n\to\infty$, we say that $\{\mu_n\}$ converges \emph{narrowly} to $\mu$ and write
	\begin{equation*}
		\mu_n\xrightarrow[]{\mathrm{nw}}\mu.
	\end{equation*}
Further, we say that $\{\mu_n\}$ converges \emph{weakly} to $\mu$ if 
	\begin{equation*}
		\int_{\Omega} \phi \cdot \mathrm{d} \mu_n 
		\to \int_{\Omega} \phi \cdot \mathrm{d} \mu
	\end{equation*}
	for all $ \phi \in C_c( \Omega)^N$ as $n\to\infty$, in which case we write 
	\begin{equation*}
		\mu_n\rightharpoonup \mu.
	\end{equation*} 
	\end{definition}
	
Our terminology is the one used in \cite{abm14}.
To avoid confusion, note that our definition of \textit{narrow convergence} is called \emph{weak convergence}  by Bogachev \cite{bogachev_vol2}, and other authors.

\section{Spaces of { Vector} Measures with Divergences} \label{sec:Mdiv}

In this section, we consider a subset of vector valued measures in $\mathrm{M}(\Omega)^N$ with $\Omega\subset \mathbb{R}^N$ that admit a weak divergence that is defined as a 
measure in $\mathrm{M}(\Omega)$ or that can be identified as a function in $L^p(\Omega)$. 
The reader is referred to work of Chen and Frid \cite{chen1999divergence,chen2003extended,chen2001theory} and \v{S}ilhav\'{y} \cite{vsilhavy2008divergence} for the seminal work on vector fields with generalized divergences and extensions. These subspaces are of particular interest, as they allow for a definition of a normal trace integral at the boundary $\partial\Omega$ of $\Omega$; { provided $\partial\Omega$ exists, note that we have only assumed that $\Omega$ is an open set}. Furthermore, the latter is fundamental in defining measures with zero normal traces. Consider the following initial definition.
\begin{definition}\label{def:DM}
	We define  $\mathrm{DM}(\Omega)$ as the set of all  $\mu \in \mathrm{M}(\Omega)^N$ for which there exists a measure $\sigma\in \mathrm{M}(\Omega)$ such that
		\begin{equation} \label{eq:divm}
		\int_{\Omega} \nabla \phi \cdot \mathrm{d} \mu 
		= - \int_{\Omega} \phi \:\mathrm{d} \sigma
		\qquad \forall \phi \in C_c^{\infty}(\Omega).
		\end{equation}
We define $\sigma$ to be the \emph{divergence of} $\mu$ and denote
\begin{equation*}
	\mathrm{div}\, \mu :=\sigma.
\end{equation*}
\end{definition}
The subset $\mathrm{DM}(\Omega)$ of $\mathrm{M}(\Omega)^N$ is then a linear space and can be  defined as
\begin{equation*}
		\mathrm{DM}(\Omega):= \{ \mu \in \mathrm{M}(\Omega)^N: \mathrm{div}\, \mu \in \mathrm{M}(\Omega) \}, 
\end{equation*}
which is a Banach space when endowed with the norm 
\begin{equation*}
	\| \mu \|_{\mathrm{DM}(\Omega)} := \| \mu \|_{\mathrm{M}(\Omega )^N} + \|\mathrm{div}\, \mu\|_{\mathrm{M}(\Omega )}.
\end{equation*}
If $\mu\in \mathrm{DM}(\Omega)$ and $\div \mu$ is absolutely continuous with respect to the Lebesgue measure, then we { state} $\mu\in \mathrm{M}^1(\Omega;\div )$. In general, for $1\leq p \leq +\infty$, we define $\mathrm{M}^p(\Omega;\div )$ as the set of all  $\mu \in \mathrm{M}(\Omega)^N$ for which there exists $h\in L^p(\Omega)$ such that 
		\begin{equation} \label{eq:div}
		\int_{\Omega} \nabla \phi \cdot \mathrm{d} \mu 
		= - \int_{\Omega} \phi h \, \mathrm{d} x
		\qquad \forall \phi \in C_c^{\infty}(\Omega),
		\end{equation}
	where ``$\mathrm{d} x$'' denotes integration with respect to the Lebesgue measure and $h:=\mathrm{div}\, \mu $. In this setting, we have
	\begin{equation*}
			\mathrm{M}^p(\Omega;\div ):= \{ \mu \in \mathrm{M}(\Omega)^N: \mathrm{div}\, \mu \in L^p(\Omega) \}, 
		\end{equation*} 
		which is likewise a Banach space when endowed with the norm 
		\begin{equation*}
			\| \mu \|_{\mathrm{M}^p(\Omega;\div )} := \| \mu \|_{\mathrm{M}(\Omega )^N} + \|\mathrm{div}\, \mu\|_{L^p(\Omega)}.
		\end{equation*} 
	Hence we refer to $\mathrm{M}^p(\Omega;\div )$ as the space of { vector} measures with (weak) divergences in $L^p(\Omega)$.  Further, the closed subspace of measures $\mu\in \mathrm{M}^p(\Omega;\div )$ such that $\div \mu$ is identically zero is denoted by $\mathrm{M}(\Omega;\div \,0)$.

	We can relate the previously defined $\mathrm{DM}(\Omega)$ and $\mathrm{M}^p(\Omega;\div )$ with the classical Sobolev space $H^1(\Omega;\div)$ defined as 
	 	\begin{equation*}
			H^1(\Omega;\div):= \{ v \in L^2(\Omega)^N: \mathrm{div}\, v\in L^2(\Omega) \}; 
		\end{equation*} 
		that is, a vector field $v$ in $L^2(\Omega)^N$ belongs to $H^1(\Omega;\div)$ if its weak divergence is in $L^2(\Omega)$. With the usual identifications, we have $H^1(\Omega;\div)\subset\mathrm{DM}(\Omega)$ and $H^1(\Omega;\div)\subset \mathrm{M}^p(\Omega;\div )$ for $1\leq p \leq 2$.

		Due to Chen and Frid \cite{chen1999divergence,chen2003extended,chen2001theory} and \v{S}ilhav\'{y}~\cite{vsilhavy2008divergence}, we observe a form of trace characterization for $\mathrm{DM}(\Omega)$. We denote by $\mathrm{Lip}^{B}(\Lambda)$ the space of Lipschitz maps $z:\Lambda\to\mathbb{R}$ for $\Lambda\subset \mathbb{R}^k$ and endow it with the norm
	\begin{equation*}
		\|z\|_{\mathrm{Lip}^{B}(\Lambda)}:= \mathrm{Lip}(z)+\sup_{x\in \Lambda}|z(x)|,
	\end{equation*}
where $\mathrm{Lip}(z)$ is the Lipschitz constant of $z$ on $\Lambda$. It follows that  for each $\mu\in {\mathrm{DM}(\Omega)}$ there exists a linear functional ${ \mathcal{N}_{\mu}:\mathrm{Lip}^{B}(\mathbb{R}^M)}\left.\right|_{\partial\Omega}\to \mathbb{R}$ such that for all $v\in { \mathrm{Lip}^{B}(\mathbb{R}^M)}$ we have
\begin{equation}\label{eq:IntParts0}
	{ \mathcal{N}}_{\mu}(v|_{\partial \Omega})=\int_{\Omega} { \dif \:\langle\langle \nabla v , \mu\rangle\rangle}+\int_\Omega v\: \dif\:\div\:\mu,
\end{equation}
{  where $\langle\langle \nabla v , \mu\rangle\rangle$ is a scalar measure on $\Omega$ that is absolutely continuous with respect to $\mu$. In the case that $v\in C^1$ is bounded and with bounded derivative,  $\dif\: \langle\langle \nabla v , \mu\rangle\rangle$ in \eqref{eq:IntParts0} can be replaced by $ \nabla v\cdot \dif \mu$. Further, } 
\begin{equation*}
|	{ \mathcal{N}}_{\mu}(g)|\leq \| \mu \|_{\mathrm{DM}(\Omega)} \|g\|_{\mathrm{Lip}^{B}(\partial\Omega)},
\end{equation*}
for all  $g\in \mathrm{Lip}^{B}(\partial\Omega)$. 

{ Based on the previous, we define $\mathrm{N}$ as 
\begin{equation*}
	\mathrm{N}(v,\mu)=\int_{\Omega} \nabla v\cdot \dif \mu+\int_\Omega v\: \dif\:\div\:\mu,
\end{equation*}
for $\mu\in {\mathrm{DM}(\Omega)}$ and $v\in C_b^1(\Omega)$, the space of bounded functions in $C^1(\Omega)$ whose partial derivatives are all bounded as well. A few words are in order concerning $\mathrm{N}(v,\mu)$: Note that since $\mu\in {\mathrm{DM}(\Omega)}$ then $\mu$ and $\mathrm{div}\, \mu $ are Borel measures, and since $v\in C_b^1(\Omega)$ then $v$ and $\nabla v$ are bounded and continuous over $\Omega$, so that $\mathrm{N}(v,\mu)$ is well-defined. In addition, in what follows we also consider 
\begin{equation*}
	\mathbf{C}_b^1(\overline{\Omega}):=C_b^1(\Omega)\cap C(\overline{\Omega}),
\end{equation*}
that is, $\mathbf{C}_b^1(\overline{\Omega})$ is the subspace of $C_b^1(\Omega)$ of functions that can be extended continuously to $\overline{\Omega}$. The latter is used for the definition of a notion of boundary condition for measures in $\mathrm{DM}(\Omega)$. 
}

Provided that $\Omega$ is sufficiently smooth, and $\mu$ and $v$ are sufficiently regular functions, we observe 
\begin{equation*}
	{ \mathrm{N}(v,\mu)}=\int_{{ \partial\Omega}} v\; \mu \cdot \vec{n}  \:\text{d}\,\mathcal{H}^{ {N-1}},
\end{equation*}
where $\vec{n}$ is the outer unit normal vector at $\partial\Omega$. We refer to the map ${ \mathrm{N}(\cdot,\mu)}$ as the \textit{normal trace} of $\mu$ on $\partial\Omega$.  More specifically, if we assume that $\Omega$ has a Lipschitz boundary, then the map $w\mapsto w|_{\partial\Omega}\cdot\vec{n} $ is extended by continuity from $C^\infty(\overline{\Omega})$ to a map from $H^1(\Omega;\div)$ to $H^{-1/2}(\partial\Omega)$. In the latter case, $\mathrm{N}_w(v|_{\partial \Omega})=\langle w\cdot\vec{n},v\rangle_{H^{-1/2},H^{1/2}}$ for all $w\in H^1(\Omega;\div)$ and all $v\in C^1(\overline{\Omega})$.

\subsection{Boundary conditions on $\mathrm{DM}(\Omega)$ and $\mathrm{M}^p(\Omega;\div )$} \label{sec:BC}		
The map $\mathrm{N}$ allows us to define subspaces of $\mathrm{DM}(\Omega)$ and $\mathrm{M}^p(\Omega;\div )$ of { vector} measures whose normal traces vanish (in the sense described by $\mathrm{N}$) on a part $\Gamma$ of the boundary {  $\partial\Omega$ (if it exists)}  as we see in what follows.  In this vein, consider the following:
	
	\begin{definition}
		Let $\Gamma \subset \partial \Omega$ be non-empty, and define
		\begin{equation*}
		\mathrm{DM}_{\,\Gamma}(\Omega)=\{ \mu \in \mathrm{DM}(\Omega) : 
		{ \mathrm{N}(\phi,\mu)} = 0  
		\enspace 
		\text{for all}
		\enspace
		\phi \in { \mathbf{C}_b^1}(\overline{\Omega}) 
		\enspace
		\text{such that}
		\enspace
		\phi \vert_{ \overline{\partial \Omega \setminus \Gamma}} = 0 \}, 
	\end{equation*} 	
and analogously we define $\mathrm{M}_{\,\Gamma}^{\,p}(\Omega;\div )$ for $1\leq p\leq +\infty$ as
		\begin{equation*}
		\mathrm{M}_{\,\Gamma}^{\,p}(\Omega;\div )=\{ \mu \in \mathrm{M}^{\,p}(\Omega;\div ) : 
		{ \mathrm{N}(\phi,\mu)} = 0  
		\enspace 
		\text{for all}
		\enspace
		\phi \in { \mathbf{C}_b^1}(\overline{\Omega}) 
		\enspace
		\text{such that}
		\enspace
		\phi \vert_{ \overline{\partial \Omega \setminus \Gamma}} = 0 \}.
	\end{equation*} 	
If $\Gamma = \emptyset$,
we denote $\mathrm{M}_{\Gamma}^p(\Omega ; \mathrm{div})$ as $\mathrm{M}^p(\Omega ; \mathrm{div})$
and $\mathrm{DM}_{\Gamma}(\Omega)$ as $\mathrm{DM}(\Omega)$.
	\end{definition}

	  Note that due to Whitney's extension result there are always non-trivial $\phi$ functions in $C^k$ with arbitrary $k\in\mathbb{N}$ and such that they vanish exactly in the closure of $\partial \Omega \setminus \Gamma$. It follows that $\mathrm{DM}_{\,\Gamma}(\Omega)$ and $\mathrm{M}_{\,\Gamma}^{\,p}(\Omega;\div )$ are linear subspaces of $\mathrm{DM}(\Omega)$ and $\mathrm{M}^p(\Omega;\div )$, respectively. In addition, if $\mu_n\to\mu$ in $\mathrm{DM}(\Omega)$, that is $\mu_n\to\mu \in \mathrm{M}(\Omega)^N$ and $ \mathrm{div}\, \mu_n\to \mathrm{div}\, \mu \in \mathrm{M}(\Omega)$, and $\phi \in { \mathbf{C}_b^1}(\overline{\Omega})$, then
\begin{equation*}
	\int_{\Omega} \nabla \phi \cdot \dif\mu_n+\int_\Omega \phi \dif\: \div\:\mu_n \to \int_{\Omega} \nabla \phi \cdot \dif\mu+\int_\Omega \phi \dif\: \div\:\mu ,
\end{equation*} 
so $\mathrm{DM}_{\,\Gamma}(\Omega)$ is closed with respect to the $\mathrm{DM}(\Omega)$ norm. Analogously,  $\mathrm{M}_{\,\Gamma}^{\,p}(\Omega;\div )$ is closed in $\mathrm{M}^{\,p}(\Omega;\div )$.

The simplest example of a measure $\mu$ in $\mathrm{M}^{\,p}(\Omega;\div )$, and hence also in $\mathrm{DM}_{\,\Gamma}(\Omega)$, is when $\mu$ given by $N$-copies of the $N$-dimensional Hausdorff measure $\mathcal{H}^N$. Clearly, $\mu = (\mathcal{H}^N , ... , \mathcal{H}^N)$ belongs to $\mathrm{M}(\Omega)^N$ and for $\Omega$ sufficiently regular we have 
	\begin{equation*}
		\int_{ \Omega } \nabla \phi \cdot \mathrm{d} \mu  = 0 
	\end{equation*}
	for every $\phi \in C_c^{\infty}(\Omega)$ {  by direct integration by parts}. Hence, 
	$\mathrm{div}\, \mu = 0 $ and $\mu \in \mathrm{M}(\Omega;\div \:0)\subset \mathrm{M}^p(\Omega;\div )$, and if $\Gamma \subset \partial \Omega$ is non-empty then it follows that, in general, $\mu \notin \mathrm{M}_{\,\Gamma}^{\,p}(\Omega;\div )$.

\begin{figure}[h]
	   	\includegraphics[scale=0.235]{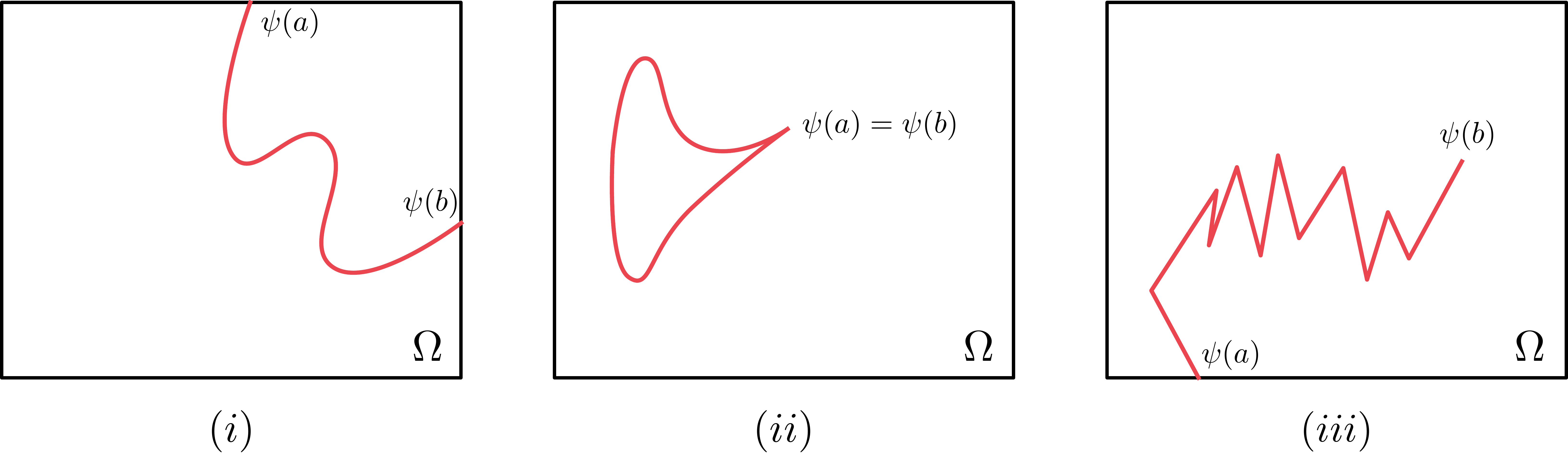}
  	\caption{Possible $C$ piecewise smooth curves determined by $\psi:(a,b)\to C$ and associated the measure $\mu=\psi^{\prime} \circ \psi^{-1}
			\, \mathcal{H}^1 \Lcorner C$  from \Cref{meas1}. In $(i)$, the endpoints $\psi(a)$, and $\psi(b)$ are located at the boundary $\partial\Omega$ of $\Omega$ so that $\mu\in\mathrm{M}^p(\Omega;\div )$ for every $p$, and this also holds true for $(ii)$ where the endpoints are  the same point within $\Omega$. Finally, in $(iii)$ the endpoints are different and $\psi(b)$ is located in $\Omega$ so that $\mu$ as no divergence represented as a function.}
  	  \label{fig:Ex1}
  	\end{figure}

The following example establishes that for some measures determined by piece-wise regular curves, divergences exist and are either zero or the difference of point measures.

\begin{example} \label{meas1}
	Let $a<b$ and 
	$\psi : (a,b)  \to C\subset \Omega$ be a continuously 
	differentiable bijection. Suppose that $\psi'$ is never zero and is integrable over $(a,b)$ so that $C$ is a regular rectifiable curve. { Assume} that $C$ is parametrized by arc length, so that $|\psi'(t)|=1$ for all $t\in(a,b)$ and $b-a$ is the length of the curve $C$. In addition, we assume that $\psi$ is extended to $[a,b]$  with $\psi(a)$ and $\psi(b)$ on $\overline{\Omega}$ so that the endpoints of $C$ may lie on the boundary $\partial\Omega$.  For $B\in \mathcal{B}(\Omega)$, define the set-function 
		\begin{equation} \label{measvec}
			\mu ( B ) = \int_{ B\cap C} \psi^{\prime} \circ \psi^{-1}
			\, \mathrm{d} \mathcal{H}^1.
		\end{equation}
	 Note that $\mu\in\mathrm{M}(\Omega)^{N}$ and 
	 \begin{equation*}
	 	|\mu|(\Omega)=\int_{C}\mathrm{d} \mathcal{H}^1=b-a.
	 \end{equation*}
	For $\phi \in  { \mathbf{C}_b^1}(\overline{\Omega})$, by a change of variables of integration:
	\begin{align*} 
			\int_{\Omega} \nabla \phi \cdot \, \mathrm{d}\mu &= 
			\int_{C} \nabla \phi \cdot \psi^{\prime} \circ \psi^{-1} \, 
			\mathrm{d} \mathcal{H}^{1}=\int_a^b\nabla \phi (\psi(t)) \cdot \psi^{\prime} ( \psi^{-1}(\psi(t))|\psi'(t)|\dif t.
	\end{align*}	
	Since $|\psi'(t)|=1$ for all $t\in(a,b)$, the integrand on the right hand side is $\frac{d}{dt} \phi(\psi(t))$. Hence
		\begin{equation} \label{eq:boundary_example}
			\int_{\Omega} \nabla \phi \cdot \, \mathrm{d}\mu = \phi(\psi(b^-)) - \phi(\psi(a^+)).
		\end{equation}
	In fact, it is not hard to see that the above holds true for $\psi : (a,b)  \to C\subset \Omega$ continuous, bijective and piecewise continuously differentiable with $|\psi'(t)|=1$ wherever the derivative exists.		
	The locations of $\psi(a)$ and $\psi(b)$ on $\overline{\Omega}$ lead to different scenarios as we next explore.
	
	Suppose that $\psi(a) , \psi(b) \in \partial \Omega$. It follows that \eqref{eq:boundary_example} is identically zero for all $\phi \in C_c^{\infty}(\Omega)$.
	Thus, $\div \mu =0$ and hence $\mu$ belongs to $\mathrm{M}^p(\Omega;\div )$ for all $p$. Further, $\div \mu =0$ also in the case that $\psi(a)=\psi(b)$ even if the point is not in 
	$\partial \Omega$. However, if $\psi(a)$ or $\psi(b)$ are not in $\partial\Omega$ and are not identical, then $\mu\notin \mathrm{M}^p(\Omega;\div )$ for each $p$: for, in general, if $\psi(a),\psi(b)\in\Omega$, then from  \eqref{eq:boundary_example}  and definition of divergence
	\begin{equation}\label{eq:divdelta}
		\div \mu=\delta_{\psi(a)}-\delta_{\psi(b)},
	\end{equation} 
	that is, the difference of two Dirac deltas at the points $\psi(a),$ and $\psi(b)$.
	
	If $\psi(a) , \psi(b) \in \overline{\partial\Omega\setminus\Gamma}$, then \cref{eq:boundary_example} vanishes
        for all $\phi \in { \mathbf{C}_b^1}(\overline{\Omega})$ such that $\phi \vert_{ \overline{\partial \Omega \setminus \Gamma}} = 0$. Hence $\mathrm{N}{ (\phi , \mu )} = 0$
        which gives $\mu\in \mathrm{M}_{\,\Gamma}^{\,p}(\Omega;\div )$.	 However, if $\psi(a) \in \overline{\partial\Omega\setminus\Gamma}$ and $\psi(b) \in \partial\Omega$ but $\psi(b) \notin  \overline{\partial\Omega\setminus\Gamma}$,
	it is clear that \cref{eq:boundary_example} fails to vanish for 
	some $\phi$, in which case 
	$\mathrm{N}{ (\phi , \mu )} \neq 0 $ so that 
	$\mu \notin \mathrm{M}_{\,\Gamma}^{\,p}(\Omega;\div )$. 
\end{example}	

The next example extends the previous one, and shows that for a point-wise weighted Hausdorff $\mathcal{H}^{1}$ measure { restricted to a} piece-wise regular curve, the divergence contains in general an $\mathcal{H}^{1}$-weighted term in addition to the difference of Dirac deltas as in \eqref{eq:divdelta}.

\begin{example} \label{example:crack0}
	Consider $a<b$ and 
	$\psi : (a,b)  \to C\subset \Omega$ defined as in the previous example. Further, let $h:[a,b]\to \mathbb{R}$ be continuously differentiable and define 
	\begin{equation*}
		\mu= (h\psi^{\prime}) \circ \psi^{-1}\:\mathcal{H}^1\Lcorner C.
	\end{equation*}
	Hence, if $\phi \in C^{1}(\overline{\Omega})$ then similarly with the previous example we observe that
	\begin{align*} 
			\int_{\Omega} \nabla \phi \cdot \, \mathrm{d}\mu &= 
		    \int_a^bh(t)\frac{d}{dt} \phi(\psi(t)) \dif t=h(b)\phi(\psi(b)) - h(a)\phi(\psi(a))-\int_a^bh'(t)\phi(\psi(t))\dif t,
	\end{align*}	
	{ (recall that $|\psi'(t)|=1$)} or equivalently
		\begin{align*} 
			\int_{\Omega} \nabla \phi \cdot \, \mathrm{d}\mu &= \int_\Omega (h\circ \psi^{-1})\phi\dif \delta_{\psi(b)}-\int_\Omega (h\circ \psi^{-1})\phi\dif\delta_{\psi(a)}-\int_{C}(h'\circ \psi^{-1})\phi \dif \mathcal{H}^1.
	\end{align*}	
	It follows that $\mu\in\mathrm{DM}(\Omega)$ with
	\begin{equation*}
		\div \mu = h\circ \psi^{-1} \delta_{\psi(a)}-h\circ \psi^{-1} \delta_{\psi(b)}+ h'\circ \psi^{-1}\:\mathcal{H}^1\Lcorner C.
	\end{equation*}
	In general, $\mu$ does not belong to $\mathrm{M}^p(\Omega;\div )$ for any $p$, unless $h$ is a constant  {  and additional conditions for $\psi$ are met}.

	\begin{figure}[h]
	   	\includegraphics[scale=0.33]{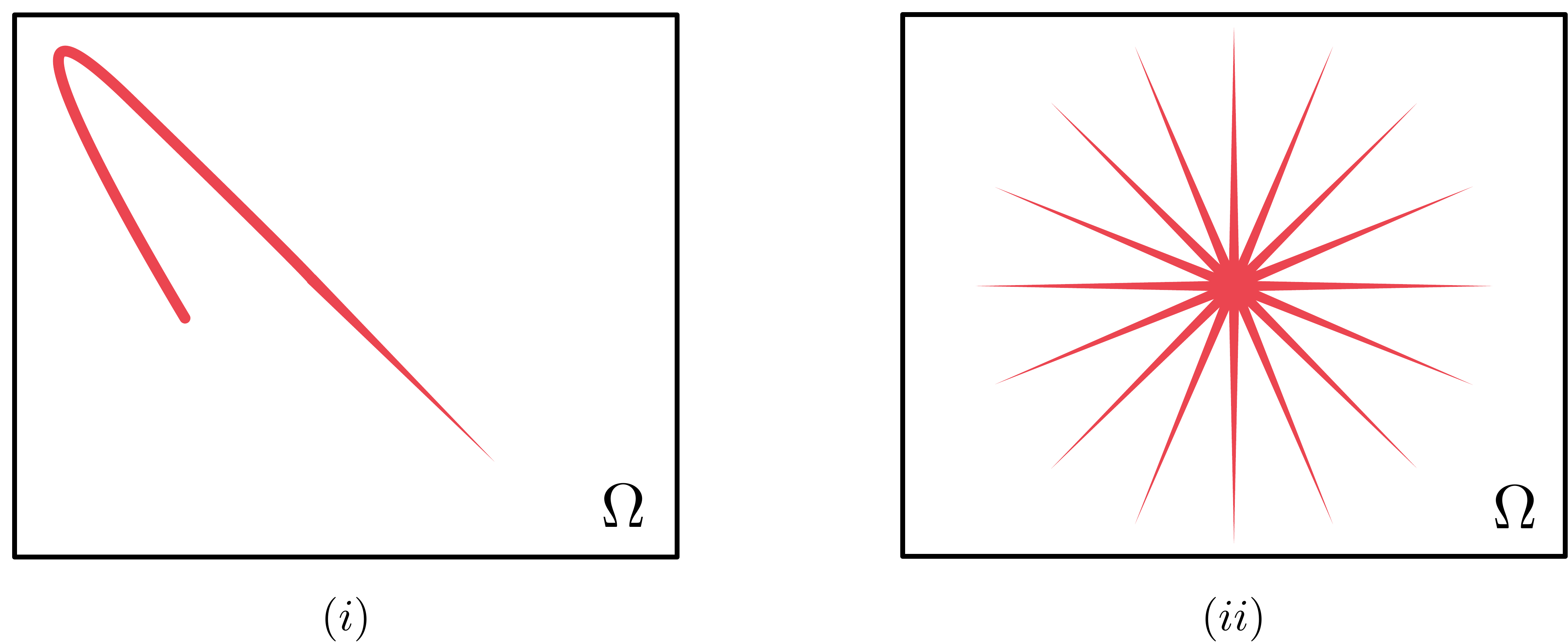}
  	\caption{Examples of supports of measures for \Cref{example:crack0}; the width of the curve corresponds to the magnitude of $|h (\cdot)|$ at each point. In $(i)$, the associated measure $\mu$ possesses a divergence given by the sum of a weighted Dirac delta and a weighted Hausdorff measure $\mathcal{H}^{1}$ on $C$. In $(ii)$, the measure associated to the graph possesses a divergence that is a finite sum of weighted Hausdorff measures $\mathcal{H}^{1}$.}
  	  \label{fig:graph}
  	\end{figure}
\end{example}

\subsection{Divergence zero measures associated with binary trees}\label{sec:BC2}

We consider in this section measures induced by binary trees and combinations thereof. This geometrical structure allows one to define measures with zero divergence as infinite series of weighted-$\mathcal{H}^{1}$ measures.

	\subsubsection{Trees with base points in { $\overline{\Omega}$}} \label{example:crack1} Let $\Omega\subset \mathbb{R}^N$ be an open set with $N\geq 2$, {  and where $\partial\Omega$ is not empty}. Consider the countable collection of non-intersecting open line segments $\{L_i\}$ in $\Omega$ such that $L_0$ has an endpoint $x_0^0$ in { $\overline{\Omega}$}, the other endpoint $x_1^0$ is shared as endpoint with only other two segments and so on so that 
	the collection of segments forms a binary tree $L$ (see \Cref{fig:graph}), i.e., 
	\begin{equation*}
		L=\bigcup_{i=0}^\infty L_i.
	\end{equation*} 
	Further, we assume that $\sum_i |L_i|<+\infty$, and  that the order of the segments is such that $L_1,$ and $L_2$ share an endpoint with $L_0$, then $L_3$ and $L_4$ share an endpoint with $L_1$ and $L_5$ and $L_6$ share an endpoint point with $L_2$, and so on and so forth. Denote the endpoints of the segment $L_i$ as $x_0^i$ and $x_1^i$; the 1 subscript denotes a shared endpoint with two segments $L_j$ and $L_k$ such that $i<j,k$.  {  Finally, we assume that all branches approach the boundary, that is $\mathrm{dist}(L_i,\partial\Omega)\to 0$ as $i\to\infty$.}
	
	{  Suppose that each $L_i$ is parameterized 
	by $\psi_i : [0, |L_i| ] \to \Omega$ given by 
	\begin{equation*}
		\psi_i( t ) = \frac{|L_i|-t}{|L_i|} x_0^i + \frac{t}{|L_i|}  x_1^i
	\end{equation*}
	so that each $|\psi_i^{\prime}| = 1$.} Further, consider the sequence of vectors $\{e_i\}$ in $\mathbb{R}^N$ defined as
	\begin{equation*}
		e_i:=\frac{x_1^i-x_0^i}{|x_1^i-x_0^i|} = \psi_i^{\prime},
	\end{equation*}
	that is, $e_i$ is the vector of the line that contains $L_i$ . Let $\{h_i\}$ be the sequence of real numbers defined as
	\begin{equation*}
		h_i=2^{-k} \qquad \text{if }\: 2^{k}-1\leq  i \leq 2^{k+1}-2, \qquad \text{ for }\: k=0,1,2,\ldots  
	\end{equation*}
	so that the sequence $\{h_i\}$ has one $1/1$ entry,  two $1/2$ entries, 
	four $1/4$ entries, eight $1/8$ entries and so on. 
	{  For $k=0,1,2,\ldots$, define 
	$\mu^k: \mathcal{B}(\Omega)\to \mathbb{R}^N$ as
	\begin{equation*}
		\mu^k=\sum_{i=0}^{r(k)-1}\mu_i  \qquad \text{ where } \quad \mu_i= h_ie_i \mathcal{H}^1\Lcorner L_i \quad \text{ with } \quad r(k)=\sum_{i=0}^k 2^i.
	\end{equation*}
	Since each $\mu_i$ takes the form of the 
	vector-valued measure given in \eqref{measvec},
	we have $\mu^k \in \mathrm{M}(\Omega)^N$. Further, since $|\mu_i|=|L_i|$ and $\sum_i |L_i|<+\infty$, we have that $\mu^k\to \mu$ as $k\to\infty$ to some $\mu\in \mathrm{M}(\Omega)^N$ such that}

	\begin{equation*}
	\mu=\sum_{i=0}^\infty\mu_i   \qquad \text{ where }\quad  \mu_i= h_ie_i \mathcal{H}^1\Lcorner L_i \quad  \text{ for } \quad  i=0,1,2,\ldots.
\end{equation*}
	
	 { A few words are in order concerning $\{\mu^k\}$, note that $\mu^0$ is associated with the trunk of the tree, and  that $\mu^k$ for $k>0$ contains $2^k$ more terms (branches of the tree) than $\mu^{k-1}$. 
	
	If $k = 0$,
	we apply \eqref{eq:boundary_example} to 
	obtain for $\phi \in \mathbf{C}_b^1(\overline{\Omega})$ that
	\begin{equation*}
		\int_{\Omega} \nabla \phi \cdot \mathrm{d} \mu^0
		= \phi(x_1^0) - \phi(x_0^0). 
	\end{equation*}
	Notice now that for $k\in\mathbb{N}$,
	repeated application of \eqref{measvec}
	leads to cancellation of all ``intermediate"
	nodes in the binary tree in the sense that
	\begin{equation} \label{finite_tree}
		\int_{\Omega} \nabla \phi \cdot \mathrm{d} \mu^k =- \phi(x_0^0) + 
		2^{-k}\sum_{ i=2^{k}-1}^{2^{k+1}-2} \phi(x_i^1). 
	\end{equation}
	Combining \eqref{eq:divdelta} with the expression above gives an 
	expression for $\mathrm{div} \, \mu^k$ as the (weighted) family 
	of point masses:
	\begin{equation*}
	\mathrm{div} \mu^k = \delta_{x_0^0} -  
		 2^{-k}\sum_{ i=2^{k}-1}^{2^{k+1}-2} \delta_{x_i^1}.
	\end{equation*}

Let $\phi \in C_c^{\infty}(\Omega)$, where $\mathrm{supp}(\phi)\subset K\subset \Omega$ and $K$ is compact. Since $\mathrm{dist}(L_i,\partial\Omega)\to 0$ as $i\to\infty$, then there exists a sufficiently large $I\in\mathbb{N}$ such that  $\phi|_{L_i}=0$ for $i\geq I$. In particular, this means that for $2^k-1\geq I$, we observe
	\begin{equation*}  
		 \sum_{ i=2^{k}-1}^{2^{k+1}-2} \phi(x_i^1) =0.
	\end{equation*}
Therefore, for an arbitrary $\phi \in C_c^{\infty}(\Omega)$, we have from \eqref{finite_tree} that
	\begin{equation}\label{eq:boundary_value}
		\int_{\Omega} \nabla \phi \cdot \mathrm{d} \mu=   - \phi(x_0^0).
	\end{equation}
	We conclude that $\mathrm{div} \, \mu=-\delta_{x_0^0}$. If in addition we have that $x_0^0 \in \partial \Omega$,
	 then $\mathrm{div} \, \mu=0$.}  Therefore, $\mu \in \mathrm{M}^p(\Omega ; \mathrm{div})$. Interestingly, the inclusion 
of $\mu$ in $\mathrm{M}_{\Gamma}^p(\Omega ; \mathrm{div})$ depends upon the location of $\Gamma$ relative to $L$. Observe that
if $x_0^0 \in \overline{\partial \Omega \setminus \Gamma}$ then, \cref{eq:boundary_value} is identically zero for all $\phi\in { \mathbf{C}_b^1}(\overline{\Omega})$ such that $\phi \vert_{ \overline{\partial \Omega \setminus \Gamma}} = 0$,
which gives ${ \mathrm{N}(\phi,\mu)} = 0 $ so that 
$\mu \in \mathrm{M}_{\Gamma}^p(\Omega ; \mathrm{div})$.

	\begin{figure}[h]
	   	\includegraphics[scale=0.38]{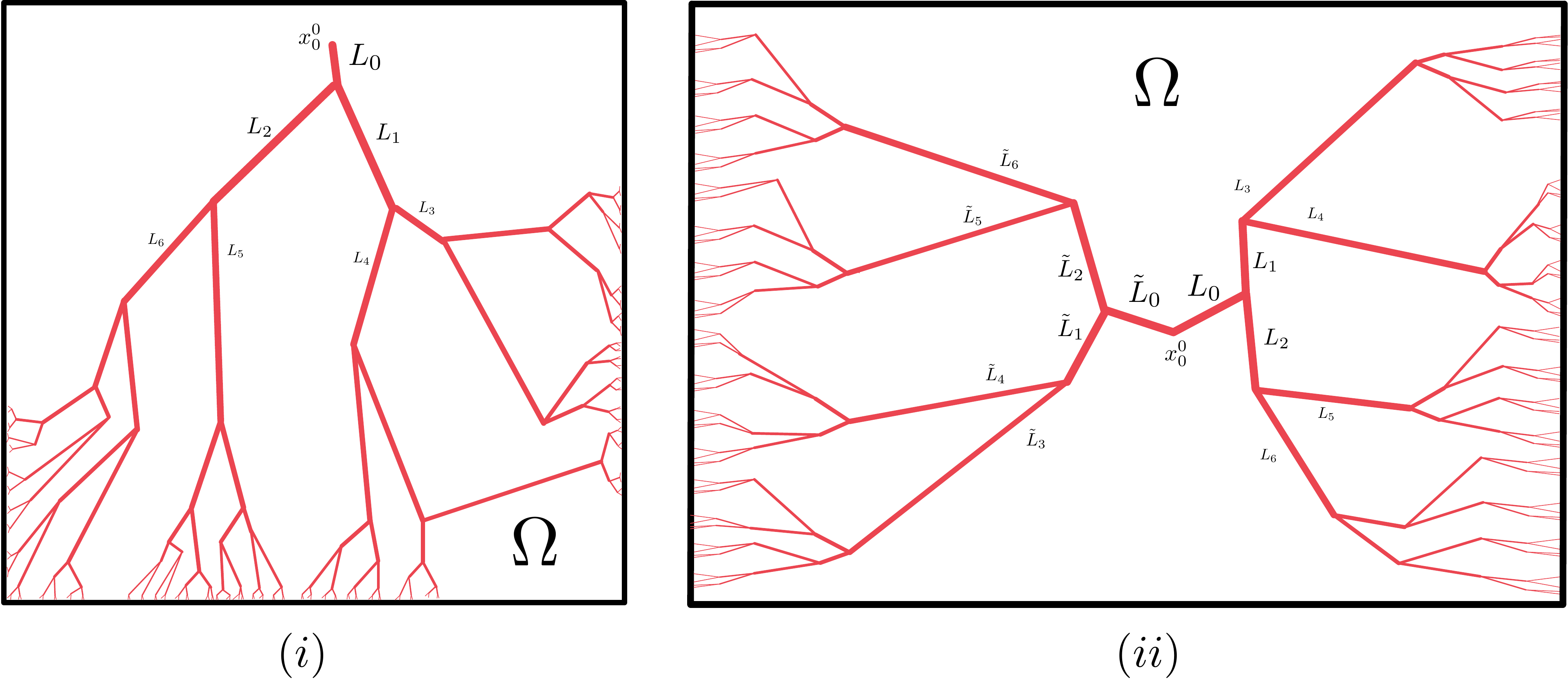}
  	\caption{Measures generated by binary trees where the width of the branch corresponds to the weight $h_i$ of the associated $\mathcal{H}^1$-measure. Tree with base point in $x_0^0\in \Omega$ in $(i)$, and two binary trees sharing the base point in  $(ii)$.}
  	  \label{fig:graph33}
  	\end{figure}
 
\subsubsection{Two trees sharing the base point}
Consider now two binary trees $L$ and $\tilde{L}$ composed of segments $\{L_i\}$ and $\{\tilde{L}_i\}$, respectively,  as defined in the previous section. Suppose $L$ and $\tilde{L}$ share the same base point $x_0^0\in \Omega$, but otherwise are non-intersecting and the closure of the union of all segments is in $\Omega$, i.e., 
\begin{equation*}
	\overline{L}\cap\overline{\tilde{L}}=\{x_0^0\}, \qquad \text{and}\qquad \overline{L}\cup\overline{\tilde{L}}\subset \Omega;
\end{equation*}
 see $(ii)$ in \Cref{fig:graph33}. Let 
 \begin{equation*}
 	W=\left(\bigcup_{i  =0 }^{\infty} L_i \right) \bigcup \left(\bigcup_{i  =0 }^{\infty} \tilde{L}_i\right),
 \end{equation*}
 and let $\mu$ and $\tilde{\mu}$ be the measures associated to $L$ and $\tilde{L}$, respectively, as constructed in \Cref{example:crack1}. Define the Borel measure $\xi: \mathcal{B}(\Omega)\to \mathbb{R}^N$ as
 \begin{equation*}
 	\xi=\mu -\tilde{\mu}, 
 \end{equation*}
and note that for any $\phi\in C^1(\overline{\Omega})$, observe that 
	\begin{equation}
		\int_\Omega \nabla \phi\cdot \dif \xi=-\phi(x_0^0)+\phi(x_0^0)=0.
	\end{equation}
	Hence, it is not only the case that $\mathrm{div}\, \xi = 0$, but 
	also that $\mathrm{N} { ( \phi ,\xi  )} = 0 $ 
	for those $\phi \in { \mathbf{C}_b^1}(\overline{\Omega})$ so that $\xi \in \mathrm{M}_{\Gamma}^p(\Omega ; \mathrm{div})$ 
	regardless of the location of  $\Gamma$.

\section{Bounded sets of measures and convergence }\label{sec:sets}

{
In this section, we consider { both} some initial results of the natural order in $\mathrm{M}(\Omega)$ induced by the cone $\mathrm{M}^{+}(\Omega)$ and some convergence results {  for sequences of convex sets of the type $\{ \mu \in X : { |\mu|} \le \alpha_n \},$ where $\{\alpha_n\}$ is a sequence in $\mathrm{M}^{+}(\Omega)$ and  $X$ is one of the spaces of measures of interest:  $\mathrm{M}(\Omega)^N$, $\mathrm{DM}_{\Gamma}(\Omega)$, or  $\mathrm{M}_{\Gamma}^p(\Omega; \mathrm{div})$}. The latter arise in applications to optimization problems over subsets of  $\mathrm{M}(\Omega)^N$ that are bounded  with respect to some specific measure. 

\begin{definition}
	Let $\mu$ and $\sigma$ be elements of $\mathrm{M}(\Omega)$. We write $\mu\leq \sigma$ if
	\begin{equation*}
		\sigma-\mu \:\text{ belongs to }\: \mathrm{M}^+(\Omega).
	\end{equation*}
\end{definition}
Note that $\mu\leq\sigma$ is equivalent to the requirement that $\mu(B)\leq\sigma(B)$ for all $B\in\mathcal{B}(\Omega)$ or that 
\begin{equation*}
	\int_\Omega w\dif \mu \leq \int_\Omega w\dif \sigma,
\end{equation*}
for all $w\in C_c(\Omega)$, or $w\in C_0(\Omega)$, such that $w(x)\geq 0$ for all $x\in\Omega$; {  the latter follows by a combination of the dominated convergence theorem and the fact that step functions are dense in $C_c(\Omega)$ and $C_0(\Omega)$}}. 
We further have the following equivalency among orders for non-negative measures. 

\begin{proposition} \label{defn_bounded}
	Let $\mu,\sigma\in\mathrm{M}^+(\Omega)$ then the following are equivalent
	\begin{itemize}
		\item[$\mathrm{(a)}$] $\mu\leq\sigma$.
		\item[$\mathrm{(b)}$] $\mu(O)\leq\sigma(O)$ for all open sets $O$ such that $O\subset \Omega$.
		\item[$\mathrm{(c)}$] $\mu(K)\leq\sigma(K)$ for all compact sets $K$ such that $K\subset \Omega$.
		\item[$\mathrm{(d)}$] For all  non-negative $w\in C_b^\infty(\Omega)$, it holds true that
		\begin{equation}\label{eq:ineqw}
			\int_\Omega w\dif \mu \leq \int_\Omega w\dif \sigma.
		\end{equation}

	\end{itemize}
\end{proposition}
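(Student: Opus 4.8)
The plan is to prove the single chain of implications $\mathrm{(a)}\Rightarrow\mathrm{(d)}\Rightarrow\mathrm{(b)}\Rightarrow\mathrm{(c)}\Rightarrow\mathrm{(a)}$, which closes a cycle through all four conditions and hence establishes their equivalence. The structural fact underpinning everything is that, because $\Omega$ is an open subset of $\mathbb{R}^M$ (thus locally compact, second countable, and Polish) and $\mu,\sigma$ are \emph{finite} positive measures, both are Radon measures; in particular they are \emph{inner regular} with respect to compact sets and \emph{outer regular} with respect to open sets. This regularity is precisely what links the set-based conditions $\mathrm{(b)}$ and $\mathrm{(c)}$ back to the order condition $\mathrm{(a)}$.

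I would begin with $\mathrm{(a)}\Rightarrow\mathrm{(d)}$, which is immediate: if $\sigma-\mu\in\mathrm{M}^{+}(\Omega)$ then for any non-negative $w\in C_b^\infty(\Omega)$---which is bounded and therefore integrable against the finite measure $\sigma-\mu$---one has $\int_\Omega w\dif(\sigma-\mu)\ge 0$, i.e.\ \eqref{eq:ineqw}. For $\mathrm{(d)}\Rightarrow\mathrm{(b)}$, fix an open set $O\subset\Omega$ and construct an increasing sequence $\{w_n\}\subset C_c^\infty(O)\subset C_b^\infty(\Omega)$ with $0\le w_n\le 1$ and $w_n\uparrow\chi_O$ pointwise, for instance by exhausting $O$ with an increasing family of compact sets and mollifying suitable cut-off functions. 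Applying \eqref{eq:ineqw} to each $w_n$ and passing to the limit by monotone convergence yields
\begin{equation*}
	\mu(O)=\lim_{n\to\infty}\int_\Omega w_n\dif\mu\le\lim_{n\to\infty}\int_\Omega w_n\dif\sigma=\sigma(O).
\end{equation*}

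For $\mathrm{(b)}\Rightarrow\mathrm{(c)}$ I would use outer regularity: given a compact $K\subset\Omega$ and any open $O$ with $K\subset O\subset\Omega$, condition $\mathrm{(b)}$ gives $\mu(K)\le\mu(O)\le\sigma(O)$, and taking the infimum over all such $O$ together with outer regularity of $\sigma$ produces $\mu(K)\le\sigma(K)$. Finally, $\mathrm{(c)}\Rightarrow\mathrm{(a)}$ follows from inner regularity: for an arbitrary Borel set $B\in\mathcal{B}(\Omega)$ we have $\mu(B)=\sup\{\mu(K):K\subset B,\ K\text{ compact}\}$, and since each such $K$ satisfies $\mu(K)\le\sigma(K)\le\sigma(B)$, we conclude $\mu(B)\le\sigma(B)$ for all $B$, i.e.\ $\sigma-\mu\in\mathrm{M}^{+}(\Omega)$.

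The step I expect to require the most care is $\mathrm{(d)}\Rightarrow\mathrm{(b)}$, where one must genuinely produce approximants lying in $C_b^\infty(\Omega)$ that increase to $\chi_O$, and---more fundamentally---the appeals to inner and outer regularity in the other two implications. These are legitimate exactly because $\mu$ and $\sigma$ are finite Borel measures on the Polish, locally compact space $\Omega$; once regularity is granted, the remaining arguments are routine monotone-convergence and infimum/supremum manipulations.
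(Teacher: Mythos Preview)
Your proof is correct and somewhat more streamlined than the paper's. The paper argues pairwise rather than in a cycle: it proves $\mathrm{(a)}\Rightarrow\mathrm{(b)}$, $\mathrm{(a)}\Rightarrow\mathrm{(c)}$ trivially, then uses outer regularity for $\mathrm{(b)}\Rightarrow\mathrm{(a)}$ and inner regularity for $\mathrm{(c)}\Rightarrow\mathrm{(a)}$ (so your $\mathrm{(c)}\Rightarrow\mathrm{(a)}$ matches theirs exactly). For $\mathrm{(a)}\Leftrightarrow\mathrm{(d)}$ the paper takes a more indirect route: it uses the already-noted equivalence of $\mathrm{(a)}$ with testing against non-negative $C_c(\Omega)$ functions, and then shows that $C_c(\Omega)$ and $C_b^\infty(\Omega)$ are interchangeable as test classes---one direction by mollifying $C_c$ functions, the other by truncating $C_b^\infty$ functions via a compact exhaustion. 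Your $\mathrm{(a)}\Rightarrow\mathrm{(d)}$ is the one-line observation that a bounded non-negative function integrated against the finite non-negative measure $\sigma-\mu$ is non-negative, which is simpler; and your $\mathrm{(d)}\Rightarrow\mathrm{(b)}$ via smooth cut-offs increasing to $\chi_O$ plus monotone convergence replaces the paper's mollification step. Your extra link $\mathrm{(b)}\Rightarrow\mathrm{(c)}$ is not needed in the paper's layout but is harmless and correct. Overall, both arguments rest on the same regularity facts; your cycle is a bit tidier, while the paper's version makes explicit the equivalence of the two test-function classes, which may be of independent interest elsewhere.
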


\begin{proof}
		The directions (a)$\Rightarrow$(b) and (a)$\Rightarrow$(c) are trivial. The proof for (b)$\Rightarrow$(a) and (c)$\Rightarrow$(a) follows because non-negative elements of $\mathrm{M}(\Omega)$ are inner and outer regular: for $\alpha\in\mathrm{M}^+(\Omega)$ and $B\in\mathcal{B}(\Omega)$ we have
		\begin{equation*}
			\alpha(B)=\inf\{\alpha(O):O \text{ open \& } B\subset O \subset\Omega\}=\sup\{\alpha(K):K \text{ compact \& } K\subset B \}.
		\end{equation*}
	Then (b)$\Rightarrow$(a) follows by taking the $\inf$ over all open sets $O$ in $\Omega$ containing $B$  and (c)$\Rightarrow$(a) by taking the $\sup$ over all compact sets $K\subset \Omega$, respectively.  
	
	In order to prove that (a)$\Leftrightarrow$(d), we only need to prove that to consider $C_b^\infty(\Omega)$ is equivalent to considering $C_c(\Omega)$ as the test function space in (d). Suppose that (d) holds true and let $w\in C_c(\Omega)$ be arbitrary. Then via classical mollifier techniques ({  see Chapter 2 in  \cite{adams_fournier_2003}}), there exists a sequence $\{w_n\}$ such that $w_n\in C_c^\infty(\Omega)$ such that $w_n\to w$ uniformly so that
	\begin{equation*}
		\int_\Omega w_n \dif \eta \to \int_\Omega w \dif \eta,
	\end{equation*}
as $n\to\infty$ for arbitrary $\eta\in\mathrm{M}^+(\Omega)$ so it is direct to prove that (d)$\Rightarrow$(a). Conversely, suppose \eqref{eq:ineqw} holds true for all non-negative functions in $C_c(\Omega)$ and let $w\in C_b^\infty(\Omega)$ be non-negative and  arbitrary. {  Note that } there exist a sequence of compact sets $\{K_n\}$ such that $\mu(\Omega\setminus K_n),\sigma(\Omega\setminus K_n)\to 0$ as $n\to\infty$. { Then,} let $w_n:\Omega\to\mathbb{R}$ such that $w_n(x)=w(x)$ for $x\in K_n$, each $w_n$ has a compact support $\tilde{K}_n$ that contains $K_n$, and $0\leq w_n\leq w$. Hence,
\begin{align*}
	\int_\Omega w \dif \mu &=\int_{\Omega} w_n \dif \mu+\int_{\Omega\setminus K_n} (w-w_n) \dif \mu\\
	&\leq \int_{\Omega} w_n \dif \sigma +\int_{\Omega\setminus K_n} (w-w_n) \dif \mu\\
	&{ =} \int_{\Omega} w \dif \sigma +\int_{\Omega\setminus K_n} (w_n-w) \dif \sigma+\int_{\Omega\setminus K_n} (w-w_n) \dif \mu.
\end{align*}
Since
\begin{equation*}
	\left|\int_{\Omega\setminus K_n} (w_n-w) \dif \sigma+\int_{\Omega\setminus K_n} (w-w_n) \dif \mu\right| \leq 2\left(\sup_{x\in\Omega}|w(x)|\right)(\sigma(\Omega\setminus K_n)+\mu(\Omega\setminus K_n))\to 0,
\end{equation*}
as $n\to \infty$, {  we have proven that} (a)$\Rightarrow$(d).
\end{proof}

We consider now sets of measures whose total variation is dominated by a non-negative measure. Specifically, let $\alpha \in \mathrm{M}^{+}(\Omega)$ be arbitrary, and define the set $\mathbf{K}(\alpha; X)$ as 
		\begin{equation}\label{eq:Kgeneral}
			\mathbf{K}(\alpha; X) := \{ \mu \in X : |\mu| \le \alpha \}, 
		\end{equation}
where $X$ is one of the spaces of measures of interest  $\mathrm{M}(\Omega)^N$, $\mathrm{DM}_{\Gamma}(\Omega)$, or  $\mathrm{M}_{\Gamma}^p(\Omega; \mathrm{div})$. Note that $\mathbf{K}(\alpha; X)$ is (in all cases) convex, closed, and non-empty since $0 \in \mathbf{K}(\alpha; X)$. The main focus of the rest of the paper is to study properties of the map
\begin{equation*}
\alpha\mapsto \mathbf{K}(\alpha; X),
\end{equation*}
that are useful for the study of stability of optimization problems and other applications where moving sets are required.

We consider two different kinds of results, a \emph{forward} and a \emph{backward} kind: Consider a sequence $\{\alpha_n\}$ in $\mathrm{M}^{+}(\Omega)$ converging to some $\alpha^*\in \mathrm{M}^{+}(\Omega)$ in some sense, then 

\begin{itemize}
	\item[i.] Suppose the sequence $\{\mu_n\}$ is such that $\mu_n\in \mathbf{K}(\alpha_n; X)$ for $n\in \mathbb{N}$. Is there a subsequence of  $\{\mu_n\}$ converging wth respect to some topology to $\mu^*\in \mathbf{K}(\alpha^*; X)$?
	\item[ii.] Suppose that $\tilde{\mu}\in \mathbf{K}(\alpha^*; X)$ is arbitrary. Is there a sequence $\{\tilde{\mu}_n\}$ such that $\tilde{\mu}_n\in \mathbf{K}(\alpha_n; X)$ for $n\in\mathbb{N}$ and such that it converges with respect to some topology to $\tilde{\mu}$?
\end{itemize}
Such form of set convergence  (when topologies are chosen properly) is called \emph{Mosco convergence} \cite{mosco1969convergence,mosco1967approximation} and they can be also described by means of the more classical Painlev\'{e}-Kuratowski set limits \cite{kuratowski1948topologie}. See the monograph \cite{Aubin2009} for an historical account on the notions of set convergence, and \cite{menaldi2021some} for relation to Gamma convergence. In general, the construction of the sequence  $\{\tilde{\mu}_n\}$, called \emph{recovery sequence} in ii is a more complicated task than the one in i.

Using the notation of \eqref{eq:Kgeneral}, for rest of the paper we use

\begin{equation} \label{KaGammaDiv}
	\mathbf{K}(\alpha):=\mathbf{K}\big(\alpha; \mathrm{M}(\Omega)^N\big) \qquad\text{\and}\qquad \mathbf{K}^p_{\Gamma}(\alpha; \mathrm{div}):=\mathbf{K}\big(\alpha; \mathrm{M}_{\Gamma}^p(\Omega; \mathrm{div})\big) 
\end{equation}
to describe subsets of $\mathrm{M}(\Omega)^N$ and $\mathrm{M}_{\Gamma}^p(\Omega; \mathrm{div})$ with total variation bounded by $\alpha\in \mathrm{M}^{+}(\Omega)$.   As usual, if $\Gamma=\emptyset$, then we denote $\mathbf{K}^p_{\Gamma}(\alpha; \mathrm{div})$ as $\mathbf{K}^p_{}(\alpha; \mathrm{div})$.

\subsection{Forward results}\label{sec:M1}

The following lemma establishes that weak convergence of the sequence $\{\alpha_n\}$ of upper bounds  is stable in the sense that any sequence $\{\mu_n\}$ such that  $\mu_n\in \mathbf{K}(\alpha_n)$ admits a convergent subsequence with limit point in $\mathbf{K}(\alpha)$ and where $\alpha$ is the weak limit of $\{\alpha_n\}$.
	
\begin{lemma} \label{lem1}
	Suppose that $\{\alpha_n\}$ is a sequence in $\mathrm{M}^+(\Omega)$ such that $\alpha_n\rightharpoonup \alpha$ in $\mathrm{M}(\Omega)$ for some $\alpha$, and let $\{\mu_n\}$ be a sequence in $\mathrm{M}(\Omega)^N$ such that $\mu_n\in \mathbf{K}(\alpha_n)$ for $n\in \mathbb{N}$. Then there exists a  subsequence of $\{\mu_n\}$ weakly convergent to some $\mu\in \mathbf{K}(\alpha)$ in $\mathrm{M}(\Omega)^N$.
\end{lemma}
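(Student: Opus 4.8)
The plan is to establish two things: (1) that the sequence $\{\mu_n\}$ is bounded in the Banach space $\mathrm{M}(\Omega)^N$, so that a weak-$*$ / weak subsequential limit $\mu$ exists by compactness; and (2) that this limit $\mu$ satisfies the constraint $|\mu|\le\alpha$, i.e.\ $\mu\in\mathbf{K}(\alpha)$. The boundedness is the easy part: since $\mu_n\in\mathbf{K}(\alpha_n)$ we have $|\mu_n|\le\alpha_n$, hence $\|\mu_n\|_{\mathrm{M}(\Omega)^N}=|\mu_n|(\Omega)\le\alpha_n(\Omega)$. A weakly convergent sequence $\alpha_n\rightharpoonup\alpha$ need not have $\alpha_n(\Omega)$ bounded a priori (since $\mathbf{1}\notin C_c(\Omega)$), so I would first extract a bound. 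The cleanest route is to note that $\mathrm{M}(\Omega)^N\cong (C_0(\Omega)^N)'$ and invoke sequential weak-$*$ compactness of bounded sets together with the uniform boundedness principle: testing $\alpha_n$ against a fixed exhausting sequence of functions in $C_c(\Omega)$ and using weak convergence gives that $\{\langle\alpha_n,\phi\rangle\}$ is bounded for each $\phi$, so $\sup_n\|\alpha_n\|<\infty$ by Banach--Steinhaus. This yields $\sup_n\|\mu_n\|_{\mathrm{M}(\Omega)^N}<\infty$.

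\textbf{Extraction of the limit.} With boundedness in hand, sequential weak-$*$ compactness of the closed balls of $\mathrm{M}(\Omega)^N=(C_0(\Omega)^N)'$ (which is separable as $\Omega$ is $\sigma$-compact) gives a subsequence, not relabeled, and a measure $\mu\in\mathrm{M}(\Omega)^N$ with $\int_\Omega\phi\cdot\mathrm{d}\mu_n\to\int_\Omega\phi\cdot\mathrm{d}\mu$ for every $\phi\in C_0(\Omega)^N$, in particular for every $\phi\in C_c(\Omega)^N$. By \Cref{def:NW} this is exactly $\mu_n\rightharpoonup\mu$, the desired weak convergence.

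\textbf{Passing the constraint to the limit.} It remains to verify $|\mu|\le\alpha$. Fix $\phi\in C_c(\Omega)^N$ with $|\phi(x)|\le 1$ for all $x$, and let $w\in C_c(\Omega)$ be any nonnegative test function; I would test against the product $w\phi$, or more directly use the variational characterization \eqref{eq:TVmu} in a localized form. The key estimate is that for nonnegative $w\in C_c(\Omega)$,
\begin{equation*}
	\left|\int_\Omega w\,\phi\cdot\mathrm{d}\mu\right|=\lim_{n\to\infty}\left|\int_\Omega w\,\phi\cdot\mathrm{d}\mu_n\right|\le\limsup_{n\to\infty}\int_\Omega w\,\mathrm{d}|\mu_n|\le\limsup_{n\to\infty}\int_\Omega w\,\mathrm{d}\alpha_n=\int_\Omega w\,\mathrm{d}\alpha,
\end{equation*}
where the first equality uses weak convergence applied to $w\phi\in C_c(\Omega)^N$, the middle inequality uses $|\phi|\le1$, the next uses $|\mu_n|\le\alpha_n$ (valid since $w\ge0$, via the order characterization), and the final equality uses $\alpha_n\rightharpoonup\alpha$. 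Taking the supremum over all admissible $\phi$ and invoking \eqref{eq:TVmu} localized to the open set $\{w>0\}$ (or integrating against $w$) yields $\int_\Omega w\,\mathrm{d}|\mu|\le\int_\Omega w\,\mathrm{d}\alpha$ for all nonnegative $w\in C_c(\Omega)$, which by the order characterization preceding \Cref{defn_bounded} gives $|\mu|\le\alpha$.

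\textbf{The main obstacle} I anticipate is making the last step rigorous: passing from the pointwise-in-$\phi$ estimate to the measure inequality $|\mu|\le\alpha$ requires interchanging the supremum over $\phi$ with the localization by $w$. I would handle this by exploiting that the total variation measure has the local representation $|\mu|(O)=\sup\{\int_O\phi\cdot\mathrm{d}\mu:\phi\in C_c(O)^N,\ |\phi|\le1\}$ for open $O$, combined with inner/outer regularity of $|\mu|$ and $\alpha$ (as in the proof of \Cref{defn_bounded}) to upgrade the $w$-weighted inequality to the required domination $|\mu|(B)\le\alpha(B)$ on all Borel sets.
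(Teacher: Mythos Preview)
Your argument is correct and follows the same overall skeleton as the paper's proof: uniform boundedness of $\{\alpha_n(\Omega)\}$ via Banach--Steinhaus, extraction of a weak subsequential limit $\mu$ by sequential weak-$*$ compactness, and then verification of $|\mu|\le\alpha$. The only genuine difference is in this last step. The paper additionally extracts (along the same subsequence) a weak limit $\sigma\in\mathrm{M}^+(\Omega)$ of $\{|\mu_{n_j}|\}$ and invokes the lower-semicontinuity result $|\mu|\le\sigma$ from \cite[Corollary~4.2.1]{abm14}; the inequality $\sigma\le\alpha$ then follows by passing $\int f\,\dif|\mu_{n_j}|\le\int f\,\dif\alpha_{n_j}$ to the limit for nonnegative $f\in C_c(\Omega)$. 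You instead bypass the intermediate measure $\sigma$ and verify $\int_\Omega w\,\dif|\mu|\le\int_\Omega w\,\dif\alpha$ directly from duality, which amounts to reproving that cited lower-semicontinuity in situ. Your ``main obstacle'' is in fact not one: since $w\ge 0$ is continuous, the measure $w\mu$ satisfies $|w\mu|=w|\mu|$, so \eqref{eq:TVmu} applied to $w\mu$ gives exactly $\sup_{|\phi|\le 1}\bigl|\int_\Omega w\phi\cdot\dif\mu\bigr|=\int_\Omega w\,\dif|\mu|$, and no localization or regularity argument is needed. The paper's route is slightly more modular; yours is more self-contained.
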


\begin{proof}
	Since $\alpha_n\rightharpoonup \alpha$ in $\mathrm{M}(\Omega)$, there exists an $M>0$ such that $\|\alpha_{  n}\|_{\mathrm{M}(\Omega)}=\alpha_n(\Omega)\leq M$ for all $n\in \mathbb{N}$ by the uniform boundedness principle.  Since $\mu_n\in \mathbf{K}(\alpha_n)$ for $n\in \mathbb{N}$, then $|\mu_n|(\Omega)\leq M$ for all $n\in \mathbb{N}$ as well. Thus, $\{\mu_n\}$ and $\{|\mu_n|\}$ are bounded in $\mathrm{M}(\Omega)^N$ and $\mathrm{M}(\Omega)$, respectively. Hence, there exist subsequences $\{\mu_{n_j}\}$ and $\{|\mu_{n_j}|\}$ such that 
	\begin{equation}
		\mu_{n_j}\rightharpoonup \mu \qquad \text{ and} \qquad |\mu_{n_j}|\rightharpoonup \sigma,
	\end{equation}
	for some $\mu\in \mathrm{M}(\Omega)^N$ and some $\sigma\in \mathrm{M}^+(\Omega)$  by \cite[Proposition 4.2.2]{abm14}. Further, by \cite[Corollary 4.2.1]{abm14} we have
	\begin{equation}\label{eq:|mu|leqa}
		|\mu|\leq \sigma.
	\end{equation}
	Since $\mu_{n_j}\in \mathbf{K}(\alpha_{n_j})$, we have 
		\begin{equation*}
\int_{\Omega} f \, \mathrm{d} |\mu_{n_j}| \le \int_{\Omega} f \, \mathrm{d} \alpha_{n_j}, 
\end{equation*}
for all $f\in C_c(\Omega)$ such that $f\geq 0$.
Given that $\alpha_{n_j}\rightharpoonup \alpha$ and $|\mu_{n_j}|\rightharpoonup \sigma$ in $\mathrm{M}(\Omega)$ and from \eqref{eq:|mu|leqa}, we have by taking the limit as $j\to\infty$ that
		 		\begin{equation*}
\int_{\Omega} f \, \mathrm{d} |\mu| \le \int_{\Omega} f \, \mathrm{d} \sigma \le \int_{\Omega} f \, \mathrm{d} \alpha. 
\end{equation*}
Finally, since $f\in C_c(\Omega)$ with $f\geq 0$ is arbitrary, $|\mu| \le \alpha$, i.e., $\mu\in \mathbf{K}(\alpha)$ by \Cref{defn_bounded}.
	\end{proof}

The following results show that improving the convergence of $\{\alpha_n\}$ leads to improved convergence for some subsequence $\{\mu_n\}$ such that  $\mu_n\in \mathbf{K}(\alpha_n)$ for all $n\in \mathbb{N}$.

\begin{theorem} \label{thm1}
Suppose that $\{\alpha_n\}$ is a sequence in $\mathrm{M}^+(\Omega)$ such that $\alpha_n\to \alpha$	 in $\mathrm{M}(\Omega)$ for some $\alpha$. Then, every sequence $\{\mu_n\}$ in
\begin{equation*}
	\mathbf{H}=\bigcup_{n=1}^\infty \mathbf{K}(\alpha_n),
\end{equation*}
admits a subsequence that converges in the narrow topology on $\mathrm{M}(\Omega)^N$ to some $\mu\in \mathrm{M}(\Omega)^N$. Further, $\mu$ belongs either to $\mathbf{K}(\alpha_i)$ for some $i\in\mathbb{N}$ or to the narrow closure of $\bigcup_{n=j}^\infty \mathbf{K}(\alpha_n)$ for each $j\in \mathbb{N}$.
\end{theorem}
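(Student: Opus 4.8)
The plan is to upgrade the weak compactness of \Cref{lem1} to narrow compactness by exploiting that \emph{strong} convergence of $\{\alpha_n\}$ forces the whole family to be uniformly tight, and then to invoke Prokhorov's theorem; locating the limit is a separate combinatorial step. For each $n$ I fix an index $m(n)\in\mathbb{N}$ with $\mu_n\in\mathbf{K}(\alpha_{m(n)})$, so that $|\mu_n|\le\alpha_{m(n)}$. Since $\alpha_n\to\alpha$ in $\mathrm{M}(\Omega)$ we have $\alpha_n(\Omega)\to\alpha(\Omega)$, hence $M:=\sup_k\alpha_k(\Omega)<\infty$ and $|\mu_n|(\Omega)\le\alpha_{m(n)}(\Omega)\le M$ for all $n$, giving a uniform total-mass bound.

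The crucial step is uniform tightness of $\{\alpha_n\}$. Given $\varepsilon>0$, strong convergence yields $N_0$ with $\|\alpha_k-\alpha\|_{\mathrm{M}(\Omega)}<\varepsilon$ for $k\ge N_0$, while inner regularity of the non-negative measure $\alpha$ (as used in \Cref{defn_bounded}) gives a compact $K_0\subset\Omega$ with $\alpha(\Omega\setminus K_0)<\varepsilon$, whence $\alpha_k(\Omega\setminus K_0)\le\alpha(\Omega\setminus K_0)+\|\alpha_k-\alpha\|_{\mathrm{M}(\Omega)}<2\varepsilon$ for $k\ge N_0$. Choosing compacts $K_1,\dots,K_{N_0-1}$ to absorb the finitely many remaining measures and setting $K=\bigcup_{j=0}^{N_0-1}K_j$ produces a single compact $K$ with $\alpha_k(\Omega\setminus K)<2\varepsilon$ for all $k$. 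Because $|\mu_n|(\Omega\setminus K)\le\alpha_{m(n)}(\Omega\setminus K)$, the family $\{|\mu_n|\}$ and hence, through the Jordan decomposition of each component (each part dominated by $|\mu_n|$), all the positive measures $\{(\mu_n^i)^{\pm}\}$ are uniformly bounded and uniformly tight. Since $\Omega$ is an open subset of $\mathbb{R}^M$ it is a Polish space, so Prokhorov's theorem applies to each of these $2N$ families; extracting nested subsequences I obtain a single subsequence along which every $(\mu_{n_j}^i)^{\pm}$ converges narrowly, and therefore $\mu_{n_j}\xrightarrow{\mathrm{nw}}\mu$ for some $\mu\in\mathrm{M}(\Omega)^N$.

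It remains to locate $\mu$, which I would settle by a dichotomy on the index set $I=\{m(n_j):j\in\mathbb{N}\}$. If $I$ is finite, some value $i$ is attained infinitely often, so a further subsequence lies entirely in $\mathbf{K}(\alpha_i)$; since narrow convergence implies weak convergence, the argument of \Cref{lem1} applied to the constant sequence $\alpha_i$ goes through: passing to a weak limit $\sigma$ of the total variations one gets $|\mu|\le\sigma\le\alpha_i$ (the second inequality holds because $\int f\,\mathrm{d}\sigma=\lim\int f\,\mathrm{d}|\mu_{n_\ell}|\le\int f\,\mathrm{d}\alpha_i$ for non-negative $f\in C_c(\Omega)$), so $\mu\in\mathbf{K}(\alpha_i)$ by \Cref{defn_bounded}. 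If instead $I$ is unbounded, then for each fixed $j_0$ there are infinitely many $j$ with $m(n_j)\ge j_0$; the corresponding terms lie in $\bigcup_{n\ge j_0}\mathbf{K}(\alpha_n)$ and still converge narrowly to $\mu$, so $\mu$ lies in the narrow closure of $\bigcup_{n\ge j_0}\mathbf{K}(\alpha_n)$, and as $j_0$ is arbitrary this holds for every $j\in\mathbb{N}$.

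The main obstacle is the uniform tightness step: this is precisely the ingredient that mere weak convergence of $\{\alpha_n\}$ (the hypothesis of \Cref{lem1}) cannot supply, and it is what separates a narrow limit from a merely weak one. Everything downstream is then routine, the only mild care being the componentwise Jordan reduction needed to deploy the scalar Prokhorov theorem on vector measures, and the observation that the location-of-the-limit argument reduces, in the finite case, to the closedness mechanism already present in \Cref{lem1}.
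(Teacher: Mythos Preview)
Your proof is correct and follows essentially the same route as the paper: establish uniform tightness of $\{\alpha_n\}$ from inner regularity plus strong convergence (splitting into a tail controlled by $\|\alpha_k-\alpha\|_{\mathrm{M}(\Omega)}$ and finitely many initial terms), pass tightness down to $\{|\mu_n|\}$, invoke Prokhorov, and then locate the limit via the bounded/unbounded dichotomy on the index map together with \Cref{lem1}. Your explicit reduction to the scalar Prokhorov theorem via the componentwise Jordan decomposition is in fact slightly more careful than the paper, which applies Prokhorov directly to the vector sequence.
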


\begin{proof}
	Given that $\alpha\in \mathrm{M}^+(\Omega)$, $\alpha$ is inner regular (see \cite[Proposition 4.2.1]{abm14}) so that for $\epsilon>0$ there exists a compact set $\Lambda_\epsilon\subset \Omega$ such that 
	\begin{equation*}
		\alpha(\Omega\setminus \Lambda_\epsilon)=\alpha(\Omega)-\alpha(\Lambda_\epsilon)<\frac{\epsilon}{2}.
	\end{equation*}
	
	Since $\alpha_n\to \alpha$	 in $\mathrm{M}(\Omega)$, for the $\epsilon>0$ chosen above there exists an $N_\epsilon\in\mathbb{N}$ such that 
	\begin{equation*}
		|\alpha-\alpha_n|(\Omega)<\frac{\epsilon}{2} \qquad \text{ for } n> N_\epsilon.
	\end{equation*}
	
	Consider $\{\alpha_1,\alpha_2,\ldots,\alpha_{N_\epsilon}\}$, then there exist compact sets $\{\Lambda_\epsilon^1,\Lambda_\epsilon^2,\ldots,\Lambda_\epsilon^{N_\epsilon}\}$ and subsets of $\Omega$ such that  
			\begin{equation*}
		\alpha_n(\Omega\setminus \Lambda^n_\epsilon)<\frac{\epsilon}{2} \qquad n=1,2,\ldots,N_\epsilon.
	\end{equation*}
	Define then
	\begin{equation*}
		\hat{\Lambda}_\epsilon:=\Lambda_\epsilon\cup \Big(\Lambda^1_\epsilon \cup \Lambda^2_\epsilon\cup \cdots \cup \Lambda^{N_\epsilon}_\epsilon\Big),
	\end{equation*}
so that $\hat{\Lambda}_\epsilon\subset \Omega$ is compact and further 
	\begin{equation*}
	\alpha(\Omega\setminus\hat{\Lambda}_\epsilon)<\frac{\epsilon}{2}
\end{equation*}
together with 
\begin{equation*}
	\alpha_n(\Omega\setminus \hat{\Lambda}_\epsilon)<\frac{\epsilon}{2} \qquad n=1,2,\ldots,N_\epsilon.
\end{equation*}
In addition, for $n>N_\epsilon$ we observe
\begin{align*}
	\alpha_n(\Omega\setminus\hat{\Lambda}_\epsilon)&=(\alpha_n-\alpha)(\Omega\setminus\hat{\Lambda}_\epsilon)+\alpha(\Omega\setminus\hat{\Lambda}_\epsilon)\\
	&\leq |\alpha-\alpha_n|(\Omega\setminus\hat{\Lambda}_\epsilon)+\alpha(\Omega\setminus\hat{\Lambda}_\epsilon)\\
	&\leq |\alpha-\alpha_n|(\Omega)+\alpha(\Omega\setminus\hat{\Lambda}_\epsilon)\\
	&<\epsilon,
\end{align*}
so that 
\begin{equation}
	\alpha_n(\Omega\setminus\hat{\Lambda}_\epsilon)<\epsilon,
\end{equation}
for all $n\in \mathbb{N}$.
	
Note that measures in $\mathbf{H}$ are uniformly bounded in the norm of $\mathrm{M}(\Omega)^N$ {  because} $\{\alpha_n(\Omega)\}$ is bounded. Then, if $\mu \in \mathbf{H}$ there exists an $n\in \mathbb{N}$ for which $|\mu|\leq\alpha_n$. Thus 
\begin{equation*}
	\sup\{|\mu|(\Omega\setminus\hat{\Lambda}_\epsilon):\mu\in \mathbf{H}\}\leq\epsilon,
\end{equation*}
and then by Prokohorov's theorem \cite[Theorem 8.6.2. and Theorem 8.6.7.]{bogachev_vol2}
for every sequence $\{\mu_n\}$ in $\mathbf{H}$ there exists a subsequence (not relabelled) such that $\mu_n\xrightarrow[]{\mathrm{nw}}\mu$. In order to prove that $\mu\in \mathbf{H}$, note that if $\{\mu_n\}$ in $\mathbf{H}$ then, $|\mu_n|\leq \alpha_{k(n)}$, where $k:\mathbb{N}\to\mathbb{N}$ is some function. If $k$ is a bounded function, then we can extract a subsequence $\{\alpha_{k(n)}\}$ that is constant and equal to some $\alpha_{k^*}$ for a fixed $k^*\in \mathbb{N}$, and hence extract a subsequence of $\{\mu_n\}$ that satisfies $|\mu_n| \leq \alpha_{k^*}$. Then, by \Cref{lem1} we can extract a further subsequence weakly convergent to $\mu^*$ such that $|\mu^*| \leq \alpha_{k^*}$, but since $\mu_n\xrightarrow[]{\mathrm{nw}}\mu$, then $\mu^*=\mu$ and $\mu \in \mathbf{H}$, and hence $\mu\in \mathbf{K}(\alpha_i)$ for some $i\in\mathbb{N}$.
On the other hand, if $k$ is an unbounded function
then there is some subsequence $\alpha_{k(j_i)} \to
\alpha$ for which $|\mu_{ n _{j_i}} | \le \alpha_{k(j_i)}$ and $\mu\in \overline{\mathbf{H}}^{\:\mathrm{nw}}$ by the same preceding argument and the use of \Cref{lem1}. The same digression can be used to show that $\mu$ belongs to the narrow closure of $\bigcup_{n=j}^\infty \mathbf{K}(\alpha_n)$ for each $j\in \mathbb{N}$.
\end{proof}
	
The previous result leads to the following corollary to be used in the study of perturbations of optimization problems. 

\begin{corollary} \label{corr:narrow}
	Suppose that $\{\alpha_n\}$ is a sequence in $\mathrm{M}^+(\Omega)$ such that $\alpha_n\to \alpha$	 in $\mathrm{M}(\Omega)$ for some $\alpha$. Let $\{\mu_n\}$ be a sequence in $\mathrm{M}(\Omega)^N$ such that $\mu_n\in \mathbf{K}(\alpha_n)$ for $n\in \mathbb{N}$. Then, there exists $\mu\in  \mathbf{K}(\alpha)$ for which $\mu_n\xrightarrow[]{\mathrm{nw}}\mu$ along a subsequence.
\end{corollary}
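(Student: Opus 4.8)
The plan is to combine the narrow sequential precompactness already obtained in \Cref{thm1} with the total-variation bound obtained in \Cref{lem1}, and then to reconcile the two limits using uniqueness of weak limits. Since $\mu_n\in\mathbf{K}(\alpha_n)\subset\mathbf{H}=\bigcup_{n=1}^\infty\mathbf{K}(\alpha_n)$ and $\alpha_n\to\alpha$ strongly in $\mathrm{M}(\Omega)$, \Cref{thm1} applies verbatim and furnishes a subsequence $\{\mu_{n_j}\}$ together with a measure $\mu\in\mathrm{M}(\Omega)^N$ such that $\mu_{n_j}\xrightarrow[]{\mathrm{nw}}\mu$. This already delivers the narrow convergence asserted in the corollary, so the only remaining task is to verify the membership $\mu\in\mathbf{K}(\alpha)$, i.e.\ $|\mu|\le\alpha$.

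To secure this bound I would pass to the subsequence $\{\mu_{n_j}\}$ and invoke \Cref{lem1}. Strong convergence $\alpha_{n_j}\to\alpha$ in $\mathrm{M}(\Omega)$ implies in particular the weak convergence $\alpha_{n_j}\rightharpoonup\alpha$, and by hypothesis $\mu_{n_j}\in\mathbf{K}(\alpha_{n_j})$ for every $j$. Hence \Cref{lem1} produces a further subsequence $\{\mu_{n_{j_k}}\}$ converging weakly to some $\tilde{\mu}$ with $\tilde{\mu}\in\mathbf{K}(\alpha)$, that is, $|\tilde{\mu}|\le\alpha$.

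It then remains to identify $\tilde{\mu}$ with $\mu$. Because narrow convergence tests against all of $C_b(\Omega)^N\supset C_c(\Omega)^N$, the narrow convergence $\mu_{n_j}\xrightarrow[]{\mathrm{nw}}\mu$ forces the weak convergence $\mu_{n_{j_k}}\rightharpoonup\mu$ along the same sub-subsequence. Since weak limits in $\mathrm{M}(\Omega)^N$ are unique—the pairing against $C_c(\Omega)^N$ separates measures—we conclude $\tilde{\mu}=\mu$, whence $\mu\in\mathbf{K}(\alpha)$. The subsequence reported is $\{\mu_{n_j}\}$, which converges narrowly to $\mu\in\mathbf{K}(\alpha)$, as claimed. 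I do not anticipate a genuine obstacle: the corollary is essentially a repackaging of the two preceding results, and the only point requiring care is the book-keeping of nested subsequences, together with the observation that narrow convergence of $\{\mu_{n_j}\}$ is inherited by each of its sub-subsequences, so that the weak limit supplied by \Cref{lem1} must coincide with the narrow limit supplied by \Cref{thm1}.
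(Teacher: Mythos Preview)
Your proof is correct and follows essentially the same strategy as the paper: combine \Cref{thm1} and \Cref{lem1} on nested subsequences and identify the two limits via the fact that narrow convergence implies weak convergence and weak limits are unique. The only cosmetic difference is the order of application—you invoke \Cref{thm1} first and then \Cref{lem1}, whereas the paper does the reverse—but the argument is otherwise identical.
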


\begin{proof}
	Since $\alpha_n \to \alpha$  in $\mathrm{M}(\Omega)$
	and $\mu_n \in \mathbf{K}(\alpha_n)$ for each $n \in \mathbb{N}$, 
	it follows from Lemma \ref{lem1} that $\mu_{n} \rightharpoonup \nu$ in $\mathrm{M}(\Omega)^N$ along a subsequence of $\{\mu_{n}\}$ with
	in $\nu\in \mathbf{K}(\alpha)$. Since the sequence $\{ \mu_n \}$ is  in $\cup_{ i =1 }^{\infty} \mathbf{K}(\alpha_n)$, and
	since $ \{ \alpha_n\}$ is in $\mathrm{M}^{+}(\Omega)$ 
	converging strongly to $\alpha \in \mathrm{M}(\Omega)$,
	it follows from Theorem \ref{thm1} that a further 
	subsequence of $\{\mu_{n}\}$ converges narrowly to  some
	$\mu$. Since narrow convergence implies weak convergence, 
	it follows that $\mu = \nu $. 
\end{proof}

{
An analogous corollary holds for both $\mathrm{M}^p(\Omega ; \mathrm{div})$ and $\mathrm{M}_{\Gamma}^p(\Omega ; \mathrm{div})$ provided the sequence $\{\mathrm{div}  \mu_n\}$ is uniformly bounded a priori. 

\begin{corollary} \label{basic_extension_to_div}
	Suppose that $\{\alpha_n\}$ 
	is a sequence in $\mathrm{M}^{+}(\Omega)$ that strongly converges to $\alpha \in \mathrm{M}^{+}(\Omega)$ and that $\{\mu_n\}$ is a sequence with   
	$\mu_n\in  \mathbf{K}^p_{\Gamma}(\alpha_n ; \mathrm{div})$ with $1<p<\infty$. Then, provided that   
	$\sup_{n\in\mathbb{N}}\|\mathrm{div} \mu_n \|_{L^p(\Omega)} < \infty$ holds true, there exists a $\mu^*\in  \mathbf{K}^p_{\Gamma}(\alpha ; \mathrm{div})$ such that $\mu_n \stackrel{\mathrm{nw}}{\longrightarrow} \mu^{*}$
	in $\mathrm{M}(\Omega)^N$ and $\mathrm{div} \mu_n \rightharpoonup \mathrm{div} \mu^{*}$ in $L^p(\Omega)$
	as $ n \to \infty$ (along a subsequence) . 
\end{corollary}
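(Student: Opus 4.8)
The plan is to combine the total-variation compactness already available from \Cref{corr:narrow} with the reflexivity of $L^p(\Omega)$ to extract a single subsequence along which both $\mu_n$ and its divergences converge, and then to pass to the limit in the defining relation \eqref{eq:div} and in the normal-trace identity so as to place the limit in $\mathbf{K}^p_{\Gamma}(\alpha; \mathrm{div})$.

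First I would note that $\mathbf{K}^p_{\Gamma}(\alpha_n; \mathrm{div}) \subset \mathbf{K}(\alpha_n)$, so $\{\mu_n\}$ meets the hypotheses of \Cref{corr:narrow}. Applying it yields a subsequence (not relabelled) and a measure $\mu^* \in \mathbf{K}(\alpha)$, i.e.\ $|\mu^*| \le \alpha$, with $\mu_n \xrightarrow[]{\mathrm{nw}} \mu^*$ in $\mathrm{M}(\Omega)^N$. Since $1 < p < \infty$, $L^p(\Omega)$ is reflexive, and the hypothesis $\sup_n \|\mathrm{div}\,\mu_n\|_{L^p(\Omega)} < \infty$ says $\{\mathrm{div}\,\mu_n\}$ is bounded in $L^p(\Omega)$; by weak sequential compactness of bounded sets in a reflexive space I pass to a further subsequence along which $\mathrm{div}\,\mu_n \rightharpoonup h$ in $L^p(\Omega)$ for some $h \in L^p(\Omega)$. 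Narrow convergence of $\mu_n$ is inherited by this further subsequence, so both convergences now hold simultaneously.

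Next I would identify $h$ with $\mathrm{div}\,\mu^*$. Fix $\phi \in C_c^{\infty}(\Omega)$. Since $\nabla\phi \in C_c(\Omega)^N \subset C_b(\Omega)^N$, narrow convergence gives $\int_\Omega \nabla\phi\cdot\mathrm{d}\mu_n \to \int_\Omega \nabla\phi\cdot\mathrm{d}\mu^*$; and since $\phi$ is bounded with compact support it lies in $L^{p'}(\Omega)$, so $\int_\Omega \phi\,\mathrm{div}\,\mu_n\,\mathrm{d}x \to \int_\Omega \phi h\,\mathrm{d}x$ by the weak-$L^p$ convergence. Passing to the limit in \eqref{eq:div} written for each $\mu_n$ yields $\int_\Omega \nabla\phi\cdot\mathrm{d}\mu^* = -\int_\Omega \phi h\,\mathrm{d}x$ for all $\phi \in C_c^{\infty}(\Omega)$. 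By \Cref{def:DM} and \eqref{eq:div} this shows $\mu^* \in \mathrm{M}^p(\Omega; \mathrm{div})$ with $\mathrm{div}\,\mu^* = h$, and in particular $\mathrm{div}\,\mu_n \rightharpoonup \mathrm{div}\,\mu^*$ in $L^p(\Omega)$.

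Finally I would verify $\mu^* \in \mathrm{M}^p_{\Gamma}(\Omega; \mathrm{div})$, which together with $|\mu^*| \le \alpha$ gives $\mu^* \in \mathbf{K}^p_{\Gamma}(\alpha; \mathrm{div})$. Let $\phi \in \mathbf{C}_b^1(\overline{\Omega})$ with $\phi|_{\overline{\partial\Omega\setminus\Gamma}} = 0$. Each $\mu_n \in \mathrm{M}^p_{\Gamma}(\Omega; \mathrm{div})$ satisfies $\mathrm{N}(\phi, \mu_n) = \int_\Omega \nabla\phi\cdot\mathrm{d}\mu_n + \int_\Omega \phi\,\mathrm{div}\,\mu_n\,\mathrm{d}x = 0$. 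The first term converges to $\int_\Omega \nabla\phi\cdot\mathrm{d}\mu^*$ because $\nabla\phi \in C_b(\Omega)^N$ and $\mu_n \xrightarrow[]{\mathrm{nw}} \mu^*$; the second converges to $\int_\Omega \phi h\,\mathrm{d}x = \int_\Omega \phi\,\mathrm{d}\,\mathrm{div}\,\mu^*$ by the weak-$L^p$ convergence, so $\mathrm{N}(\phi, \mu^*) = 0$ and $\mu^*$ has vanishing normal trace on $\Gamma$. The step I expect to be the main obstacle is precisely the convergence of this second term: unlike in the previous paragraph, $\phi$ need not have compact support, so one must guarantee $\phi \in L^{p'}(\Omega)$ in order to test the weak-$L^p$ limit against it. This is exactly the integrability already implicit in requiring the normal-trace functional $\mathrm{N}(\phi, \mu^*)$ to be finite when $\mathrm{div}\,\mu^* \in L^p(\Omega)$ (equivalently, $\mathrm{div}\,\mu^* \in L^1(\Omega)$ so that $\int_\Omega \phi\,\mathrm{d}\,\mathrm{div}\,\mu^*$ converges); once this is granted the limit passes and the proof concludes.
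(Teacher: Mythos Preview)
Your proof is correct and follows essentially the same route as the paper: narrow compactness from \Cref{corr:narrow}, weak $L^p$ compactness of the divergences by reflexivity, identification of the limit via \eqref{eq:div}, and passage to the limit in the normal-trace identity. The $L^{p'}$-integrability issue you flag at the end is also present (and not explicitly addressed) in the paper's own argument; it is resolved by the implicit requirement that $\mathrm{N}(\phi,\cdot)$ be well-defined on $\mathrm{M}^p_\Gamma(\Omega;\mathrm{div})$, which in practice amounts to $|\Omega|<\infty$ so that bounded continuous $\phi$ lie in $L^{p'}(\Omega)$.
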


\begin{proof}
	Since $\mu_n$ is in $\mathbf{K}(\alpha_n)$, it follows from \Cref{corr:narrow} that 
	$\mu_n$ narrowly converges to some $\mu^* \in  \mathbf{K}(\alpha)$ along a subsequence. We are left to prove that $\mu^*$ is in $\mathrm{M}^p_\Gamma(\Omega ; \mathrm{div})$.  Since $\|\mathrm{div} \, \mu_n \|_{L^p(\Omega)} < \infty$, there is 
	a further subsequence such that 
		\begin{equation}\label{eq:divmunh}
			 \mathrm{div} \, \mu_n \rightharpoonup h  
		\end{equation}
	in $L^p(\Omega)$ for some $h \in L^p(\Omega)$ as $n \to \infty$. 
	For $\phi \in C_c^{\infty}(\Omega)$ and by
	\Cref{eq:divm} we observe 
		\begin{equation} \label{eq:narrowWk}
			\int_{\Omega} \nabla \phi \cdot { \dif\mu^*}  =
			\lim_{n \to \infty} \int_{\Omega} \nabla \phi \cdot { \dif\mu_{n} } 
			= - \lim_{n \to \infty} \int_{\Omega} \phi \:\mathrm{div} \, \mu_n  \dif x 
			=  - \int_{\Omega} h \:\phi \dif x 
		\end{equation}
	where the left hand side limit is implied by the narrow convergence $\{\mu_n\}$ to $\mu^*$ and the limit on the
	right hand side is due to \eqref{eq:divmunh}; thus $\mathrm{div} \mu^* =h$ and $\mu^*\in \mathrm{M}^p(\Omega ; \mathrm{div})$. 
	
	For $\Gamma\neq\emptyset$, we have that
	$\mu_n\in \mathrm{M}^p_\Gamma(\Omega ; \mathrm{div})$ and hence 
	\begin{equation*}
			\mathrm{N}{ (\phi, \mu_n)}=\int_{\Omega} \nabla \phi \cdot \dif\mu_n+\int_\Omega \phi\: \:\div\:\mu_n \dif x=0
	\end{equation*}  
	for all $\phi \in { \mathbf{C}_b^1}(\overline{\Omega})$ such that $\phi|_{\overline{\Omega\setminus \Gamma }} = 0$. Since $\nabla \phi \in C_{ b}(\Omega)^N$ it follows by the same argument in \eqref{eq:narrowWk} that 
	\begin{equation*}
		\mathrm{N}{ (\phi, \mu^*)}=0,
	\end{equation*}
	{ due to $\mu_n \stackrel{\mathrm{nw}}{\longrightarrow} \mu^{*}$}; thus $\mu^*\in \mathrm{M}^p_\Gamma(\Omega ; \mathrm{div})$.	
\end{proof} }
\begin{remark}
	The previous holds true for the case $p=\infty$ if the weak convergence is replaced by weak-* convergence for $\{\mathrm{div} \mu_n \}$.
\end{remark}

\begin{remark}
It should be noted that the narrow convergence in the conclusion of \Cref{thm1} and 	\Cref{corr:narrow} is the best possible to be expected. Consider for example $\Omega=(0,2\pi)$, $\alpha$ be the Lebesgue measure, and let $\mu_n=\sin(n x)\:\alpha$ so that $|\mu_n|\leq \alpha $. Further, $\mu_n\rightharpoonup 0$ and $\mu_n\xrightarrow[]{\mathrm{nw}} 0$, however $\mu_n$ does not converge to zero strongly, as $|\mu_n|(\Omega)=4$.
\end{remark}

\subsection{Backward results}\label{sec:M2}

{
The previous \Cref{corr:narrow} shows that a sequence of measures $\mu_n \in \mathbf{K}(\alpha_n)$ 
converges (along a subsequence) narrowly to a measure $\mu \in \mathbf{K}(\alpha)$ provided that $\alpha$ is the 
strong limit of the sequence $\{\alpha_n\}$. In fact, the following converse result can be obtained under the same assumptions: For a given $\mu \in \mathbf{K}(\alpha)$ we can find a ``recovery'' sequence $\mu_n \in \mathbf{K}(\alpha_n)$
that converges in norm to $\mu$. We show this in \Cref{thm:Recovery1},
which follows after the next classical lemma for the total variation of mutually singular measures.} 

Recall the following standard definitions: Given two measures $\mu \in \mathrm{M}(\Omega)^N$, and $\alpha \in \mathrm{M}^{+}(\Omega)$, we say that $\mu$ is \emph{absolutely continuous with respect to the measure} $\alpha$, and we denoted it as $\mu\ll \alpha$, if for every Borel set $B$ such that $\alpha(B)=0$ then $\mu(B)=0$. Further, we say that the measure $\mu$ is \emph{singular with respect to} $\alpha$, denoted as $\mu \perp\alpha$, if there exists a Borel set $B$ such that $\alpha(B)=0$ and $\mu$ is concentrated on $B$, i.e., $\mu(C)=0$ for all Borel sets such that $B\cap C=\emptyset$. The support of $\mu$, denoted as $\mathrm{supp}\,\mu$, is the smallest closed set $C\subset \Omega$ such that $|\mu|(\Omega\setminus C)=0$ and it can be proven that equivalently:
\begin{equation*}
	\mathrm{supp}\,\mu=\{x\in \Omega: \forall r>0, |\mu|(B_r(x))>0\},
\end{equation*}
where $B_r(x)=\{y\in \Omega: |x-y|<r\}$.

The set of (equivalence classes of) functions $f:\Omega\to \mathbb{R}^{N}$ such that
\begin{equation*}
	\int_\Omega |f|\dif \alpha <+\infty,
\end{equation*}
is denoted as $L^1(\Omega,\alpha)^N$. If for $f=\{f_i\}_{i=1}^N$ and $\mu=\{\mu_i\}_{i=1}^N$, we have that $f_i\in L^1(\Omega,\mu_i)$ for $i=1,\ldots,N$, then we write that $f\in L^1(\Omega,\mu)$.

We start with the result of Lebesgue and Radon-Nikodym decompositions (see \cite[Theorem 4.2.1]{abm14} and \cite{rudin1987real}) in our { vector} setting.

\begin{lemma} \label{ezlemma}
	Let $\mu \in \mathrm{M}(\Omega)^N$ and $\alpha \in \mathrm{M}^{+}(\Omega)$. Then, there exists $F\in L^{1}(\Omega , \alpha)^N$ and $\mu^{s} \in \mathrm{M}(\Omega)^N$
such that 
\begin{equation*}
	\mu(B) = \int_{B} F \, \mathrm{d} \alpha + \mu^{s}(B),
\end{equation*}
for each Borel set $B\subset \Omega$  with $\mu^{s} \perp \alpha$, and for which
\begin{equation*}
	|\mu|(B) = \int_{B} |F| \, \mathrm{d} \alpha + |\mu^{s}|(B).
\end{equation*}
\end{lemma}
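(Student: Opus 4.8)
The plan is to reduce the vector-valued statement to the classical scalar Lebesgue--Radon--Nikodym theorem applied componentwise, and then to recover the total variation identity from the fact that the two resulting pieces are mutually singular.

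First, writing $\mu = (\mu_1, \dots, \mu_N)$ with each $\mu_i \in \mathrm{M}(\Omega)$, I would invoke the scalar decomposition (Theorem 4.2.1 in \cite{abm14}) for each component: there exist $F_i \in L^1(\Omega,\alpha)$ and $\mu_i^s \in \mathrm{M}(\Omega)$ with $\mu_i^s \perp \alpha$ such that $\mu_i(B) = \int_B F_i \, \mathrm{d}\alpha + \mu_i^s(B)$ for every $B \in \mathcal{B}(\Omega)$. Setting $F := (F_1,\dots,F_N)$ and $\mu^s := (\mu_1^s,\dots,\mu_N^s)$ then yields the stated additive decomposition componentwise. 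That $F \in L^1(\Omega,\alpha)^N$ follows from $|F| \le \sum_{i=1}^N |F_i|$, so $\int_\Omega |F|\,\mathrm{d}\alpha \le \sum_{i=1}^N \int_\Omega |F_i|\,\mathrm{d}\alpha < \infty$. For the singularity of the vector measure $\mu^s$, I would pick for each $i$ a Borel set $B_i$ with $\alpha(B_i)=0$ on which $\mu_i^s$ is concentrated, and set $B_0 := \bigcup_{i=1}^N B_i$; then $\alpha(B_0)=0$ and every $\mu_i^s$, hence $\mu^s$, is concentrated on $B_0$, so $\mu^s \perp \alpha$.

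The main step, and the one requiring care, is the total variation identity, since the total variation of a vector measure is \emph{not} the sum of the component total variations. The key observation is that the absolutely continuous part $\nu_{ac} := F\alpha$ and the singular part $\mu^s$ are themselves mutually singular: indeed $(F\alpha)(B) = \int_B F\,\mathrm{d}\alpha = 0$ whenever $B \subset B_0$ (as $\alpha(B_0)=0$), so $\nu_{ac}$ is concentrated on $\Omega \setminus B_0$ while $\mu^s$ is concentrated on $B_0$. Invoking additivity of the total variation over mutually singular measures, I obtain $|\mu| = |\nu_{ac}| + |\mu^s|$, and combining this with the density formula $|\nu_{ac}|(B) = \int_B |F|\,\mathrm{d}\alpha$ gives $|\mu|(B) = \int_B |F|\,\mathrm{d}\alpha + |\mu^s|(B)$, as claimed.

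I expect the additive decomposition and the membership $F \in L^1(\Omega,\alpha)^N$ to be routine, so the genuine obstacle is the norm identity. An alternative that obtains it most directly is to start from the polar decomposition $\mathrm{d}\mu = g\,\mathrm{d}|\mu|$ with $|g|=1$ holding $|\mu|$-almost everywhere, to decompose the scalar measure $|\mu| = f\alpha + \lambda$ with $f \ge 0$ and $\lambda \perp \alpha$, and to set $F := gf$ together with $\mu^s(B) := \int_B g\,\mathrm{d}\lambda$. Since $\lambda(B) \le |\mu|(B)$ one has $\lambda \ll |\mu|$, so $|g|=1$ holds $\lambda$-almost everywhere as well; then $\int_B |F|\,\mathrm{d}\alpha = \int_B |g|f\,\mathrm{d}\alpha = (f\alpha)(B)$ and $|\mu^s|(B) = \int_B |g|\,\mathrm{d}\lambda = \lambda(B)$, and these reproduce $|\mu| = f\alpha + \lambda$ directly, bypassing the mutually-singular lemma. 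I would present the componentwise argument as the main line and note the polar-decomposition route as the quickest path to the total variation formula.
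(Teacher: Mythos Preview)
Your proposal is correct and follows essentially the same route as the paper's proof: invoke the Lebesgue--Radon--Nikodym decomposition (the paper cites \cite[Theorem 4.2.1]{abm14} directly for vector measures, you reconstruct it componentwise), then obtain $|\mu| = |F\alpha| + |\mu^s|$ from mutual singularity and apply the density identity $|F\alpha| = |F|\alpha$ (the paper cites \cite[Proposition 1.23]{afp00}). The paper's proof is two sentences of citations; you have simply unpacked the details it leaves implicit, and your alternative via the polar decomposition is a nice addition but not needed.
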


\begin{proof} The fact that $|\mu| = |F\alpha| + |\mu^s|$ is a corollary 
to the Lebesgue decomposition, which exists by assumption \cite[Theorem 4.2.1]{abm14}.
Finally, since $F\in L^{1}(\Omega , \alpha)^N$ and $\alpha$ is positive, a standard result gives
$|F\alpha| = |F|\alpha$ and proves the claim \cite[Proposition 1.23]{afp00}. 
\end{proof}

The function $F$ above is commonly written as $\frac{\dif \mu}{\dif  \alpha}$ and called the Radon-Nikodym derivative, and it is unique up to a set of $\alpha$-measure zero. The $\alpha$-integrability of $F$ is a consequences of the fact that $|\mu|(\Omega)<+\infty$. 

The following result represents the initial construction of the recovery sequence in the case $\mathrm{M}(\Omega)^N$ and associated to $\alpha\mapsto \mathbf{K}(\alpha)$. The construction of the recovery sequence is done by means of scaling via the Radon-Nikodym derivative $\frac{\dif \alpha^a_n}{\dif \alpha}$ (where $\alpha^a_n$ is the absolutely continuous part of the Lebesgue decomposition with respect to $\alpha$) as we see next.

\begin{theorem}\label{thm:Recovery1}
Suppose that $\{\alpha_n\}$ is a sequence in $\mathrm{M}^+(\Omega)$ such that $\alpha_n\to \alpha$	 in $\mathrm{M}(\Omega)$ for some $\alpha$, and that $\mu\in \mathbf{K}(\alpha)$ is arbitrary. Then, there exists a sequence $\{\mu_n\}$ in $\mathrm{M}(\Omega)^N$ such that $\mu_n\in \mathbf{K}(\alpha_n)$ for $n\in\mathbb{N}$ and $\mu_n\to \mu$ in $\mathrm{M}(\Omega)^N$.
	\end{theorem}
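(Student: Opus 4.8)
The plan is to reduce the whole statement to a pointwise truncation of the Radon--Nikodym derivative of $\mu$ with respect to $\alpha$. First I would record the only structural fact about $\mu$ that is needed: since $\mu\in\mathbf{K}(\alpha)$ means $|\mu|\le\alpha$, we have $\mu\ll\alpha$, so \Cref{ezlemma} applies with vanishing singular part and yields $F\in L^1(\Omega,\alpha)^N$ with $\mu=F\alpha$ and $|\mu|=|F|\alpha$. The bound $|\mu|\le\alpha$ then reads simply as $|F|\le 1$ for $\alpha$-a.e. $x$.

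Next I would Lebesgue-decompose each $\alpha_n$ against $\alpha$, writing $\alpha_n=g_n\alpha+\alpha^s_n$ with density $g_n=\frac{\dif\alpha^a_n}{\dif\alpha}\ge 0$ in $L^1(\Omega,\alpha)$, positive singular part $\alpha^s_n\ge 0$, and $\alpha^s_n\perp\alpha$. The recovery sequence is obtained by truncating $F$ so that it fits under $\alpha_n$, namely
\begin{equation*}
	\mu_n:=F\,\min(1,g_n)\,\alpha.
\end{equation*}
Because $\min(1,g_n)$ is bounded, $\mu_n$ is a genuine finite $\mathbb{R}^N$-valued measure, and since $|F|\le 1$ one computes $|\mu_n|=|F|\min(1,g_n)\alpha\le\min(1,g_n)\alpha\le g_n\alpha\le\alpha_n$; hence $\mu_n\in\mathbf{K}(\alpha_n)$ for every $n$.

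It remains to establish norm convergence. Here $\mu-\mu_n=F\,(1-g_n)^+\alpha$ is absolutely continuous with respect to $\alpha$, so its total variation is $|F|(1-g_n)^+\alpha$ and
\begin{equation*}
	\|\mu-\mu_n\|_{\mathrm{M}(\Omega)^N}=\int_\Omega |F|\,(1-g_n)^+\dif\alpha\le\int_\Omega (1-g_n)^+\dif\alpha.
\end{equation*}
The crux, and the one place where the strong convergence $\alpha_n\to\alpha$ is genuinely used, is to control the right-hand side. Since $\alpha_n-\alpha=(g_n-1)\alpha+\alpha^s_n$ splits into a part absolutely continuous with respect to $\alpha$ and a part singular to $\alpha$, the two summands are mutually singular, so their total variations add:
\begin{equation*}
	|\alpha_n-\alpha|(\Omega)=\int_\Omega |g_n-1|\dif\alpha+\alpha^s_n(\Omega).
\end{equation*}
In particular $\int_\Omega (1-g_n)^+\dif\alpha\le\int_\Omega |g_n-1|\dif\alpha\le|\alpha_n-\alpha|(\Omega)\to 0$, which closes the estimate and gives $\mu_n\to\mu$ in $\mathrm{M}(\Omega)^N$.

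The main obstacle is recognizing the additivity of the total variation under the absolutely-continuous/singular splitting of $\alpha_n-\alpha$; once that identity is available, the truncation does all the remaining work. I expect no smoothness of $\Omega$, no hypothesis on $\Gamma$, and no divergence bound to enter, precisely because the construction stays entirely inside $\mathrm{M}(\Omega)^N$ and produces convergence in the strongest (norm) topology, which is exactly what is wanted for a recovery sequence.
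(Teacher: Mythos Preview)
Your proof is correct and follows essentially the same route as the paper: represent $\mu=F\alpha$ with $|F|\le 1$, Lebesgue-decompose $\alpha_n=g_n\alpha+\alpha_n^s$, observe that $\|g_n-1\|_{L^1(\Omega,\alpha)}\le|\alpha_n-\alpha|(\Omega)\to 0$, and build $\mu_n$ by scaling $F$. The only cosmetic difference is that the paper sets $\mu_n=g_nF\alpha$ while you set $\mu_n=\min(1,g_n)F\alpha$; both choices satisfy $|\mu_n|\le g_n\alpha\le\alpha_n$ and converge in norm to $\mu$ by the same $L^1(\Omega,\alpha)$ estimate.
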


	\begin{proof}
		Any $\mu\in \mathbf{K}(\alpha)$ satisfies $|\mu|\leq \alpha$ so that $\mu\ll \alpha$. Then, by the Radon-Nikodym decomposition  we have $\mu=F \alpha$ or equivalently
	\begin{equation*}
		\mu( B ) = \int_{B} F \, \mathrm{d}\alpha,
	\end{equation*}
	for any Borel set $B$ and
	where $F:\Omega\to \mathbb{R}^N$ is such that $F\in L^1(\Omega, \alpha)^N$. Given that
	$|\mu|=|F|\alpha$ by \cite[Proposition 1.23]{afp00} and $|\mu| \le \alpha$,
	it follows that $|F| \le 1$ .
	
	Since $\alpha_n\in \mathrm{M}^+(\Omega)$ for all $n\in \mathbb{N}$, then again by the Lebesgue and  the Radon-Nikodym Decomposition Theorem we observe that
	\begin{equation*}
		\alpha_n=g_n\alpha+\alpha_n^s,
	\end{equation*}
	where $g_n:\Omega\to \mathbb{R}$ is such that 
	$g_n\in L^1(\Omega;\alpha)^+$ and $\alpha_n^s \perp \alpha$ with $\alpha_n^s\in\mathrm{M}^+(\Omega)$. 
	Since $\alpha(\Omega) = \int_{\Omega} 1 \, \mathrm{d}\alpha$,
	we also have 
	\begin{equation*}
		(\alpha_n - \alpha)( \Omega )  
		= \int_{\Omega} g_n - 1 \, \mathrm{d}\alpha +
		\alpha_n^s(\Omega).
	\end{equation*}
	It follows from Lemma \ref{ezlemma} that 
		\begin{equation}
			\| \alpha_n - \alpha\|_{\mathrm{M}(\Omega)} = 
			\int_{\Omega} |g_n - 1| \, \mathrm{d}\alpha +
		\alpha_n^s(\Omega) \geq \int_{\Omega} |g_n - 1| \, \mathrm{d}\alpha, 
		\end{equation}
	given that $\alpha_n^s\geq 0$. Since $| \alpha_n - \alpha |(\Omega) \to 0$ by assumption, then
	$\| g_n - 1 \|_{L^1(\Omega , \alpha)} \to 0 $ as well.
	Define the sequence  $\{F_n\}$ in $L^1(\Omega; \alpha)^N$ as 
	\begin{equation*}
		F_n=g_nF,
	\end{equation*}
	Further, define for each $n\in \mathbb{N}$ the measure $\mu_n\in \mathrm{M}(\Omega)^N$ as $\mu_n=F_n\alpha$, that is for every Borel set $B$ we have		\begin{equation*}
		\mu_n(B)=\int_{ B}F_n\dif \alpha.
	\end{equation*} 
	Note that since $g_n\geq 0$ and $|F|\leq 1$ then
	\begin{equation*}
		|\mu_n|{ =} |F_n|\alpha { =} |F| g_n\alpha\leq g_n \alpha\leq\alpha_n,
	\end{equation*}
	that is $\mu_n\in \mathbf{K}(\alpha_n)$. Finally, since $|F| \le 1 $, we obtain that
	\begin{align*}
		\limsup_{n\to\infty}\| \mu_n - \mu \|_{\mathrm{M}(\Omega )^N} &\le 
		\limsup_{n\to\infty}\int_{\Omega} | F_n - F | \, \mathrm{d} \alpha \\
		&=\limsup_{n\to\infty} \int_{\Omega} |F|| g_n - 1| \, \mathrm{d} \alpha \\
		&\le \limsup_{n\to\infty}\| g_n - 1 \|_{L^1(\Omega , \alpha)}=0,
	\end{align*}
	which shows that $\mu_n \to \mu$  in norm and concludes the result. 
	\end{proof}

	For the sake of simplicity, we consider the following notation. Let $\mu\in\mathrm{M}(\Omega)^N$,  $\sigma\in \mathrm{M}^+(\Omega)$ and let $\mu=\mu^a+\mu^s$ be the associated Lebesque decomposition where $\mu^a\ll \sigma$ and $\mu^s\perp \sigma$. We denote by $F^\mu_\sigma:\Omega\to\mathbb{R}^N$ the function in $L_{\sigma}^1(\Omega)^N$ such that 
	\begin{equation*}
		\mu^a(B)=\int_B F^\mu_\sigma \dif \sigma,
	\end{equation*}
	for any Borel set $B$ in $\Omega$. The existence of $F^\mu_\sigma$ is guaranteed by the Radon-Nikodym Theorem. If $\sigma=\mathcal{L}^N$, the $N$-dimensional Lebesgue measure, we omit the subscript ``$\mathcal{L}^N$'' and  write $F^\mu:=F^\mu_{\mathcal{L}^N}$.
	
	In the following lemma, we show that if $\mu\in \mathrm{DM}(\Omega)$ then the measure defined by $\nu=g \mu$ where $g$ is $\mu$-integrable, smooth and with $\mu$-integrable gradient is also in $ \mathrm{DM}(\Omega)$. If additionally, $\mu\in \mathrm{M}^p(\Omega;\div )$, then in order to conclude that $\nu\in \mathrm{M}^p(\Omega;\div )$ additional structural assumptions are required not only on $g$ but also on $\mu$ as we see next.

	\begin{lemma}\label{lem:DM}
		Let $g:\Omega\to \mathbb{R}$ be bounded, $g\in C^1(\Omega)$, and also $\nabla g\in L^1(\Omega, \mu)$ for some $\mu\in \mathrm{M}(\Omega)^{N}$. Define the set function $\nu$ as
		\begin{equation*}
			\nu(B)=\int_{ B} g \dif \mu,
		\end{equation*}
		for any Borel set $B\subset \Omega$. Hence,
		\begin{itemize}
			\item[$\mathrm{(i)}$] if $\mu\in \mathrm{DM}(\Omega)$, then $\nu \in \mathrm{DM}(\Omega)$ and its divergence $\div\: \nu$ is given by
		\begin{equation*}
			\div\: \nu (B)= \int_{ B} g \:\dif \div\,\mu + \int_{B} \nabla g \cdot\dif \mu,
		\end{equation*}
		for any Borel set $B\subset \Omega$.
		\item[$\mathrm{(ii)}$] In addition, suppose $\mu\in \mathrm{M}^p(\Omega;\div )$ for $1\leq p\leq +\infty$,  and $\nabla g\cdot F^\mu\in L^p(\Omega)$. Then, $\nu\in \mathrm{M}^p(\Omega;\div )$ provided that $\nabla g$ vanishes in the support of the measure that is singular to the Lebesgue measure in the Lebesgue decomposition of $\mu$, that is, $\nabla g=0$ in  $\mathrm{supp}\: \mu^s$ where $\mu=F^\mu \mathcal{L}^N+\mu^s$ is the Lebesgue decomposition of $\mu$ with respect to the $N$-dimensional Lebesgue measure $\mathcal{L}^N$. The divergence of $\nu$ in this case is given by
		\begin{equation*}
			\div \,\nu =g \,\div\, \mu+\nabla g\cdot F^\mu.
		\end{equation*}
		\end{itemize}
{  Furthermore, if $\partial\Omega$ is not empty, and} we assume that $g\in { \mathbf{C}_b^1}(\overline{\Omega})$, then $\mathrm{(i)}$ and $\mathrm{(ii)}$ hold true exchanging $\mathrm{DM}(\Omega)$ by $\mathrm{DM}_\Gamma(\Omega)$, and $\mathrm{M}^p(\Omega;\div)$  by $\mathrm{M}^p_\Gamma(\Omega;\div)$, {  for a non-empty $\Gamma\subset \partial\Omega$}.
		
			\end{lemma}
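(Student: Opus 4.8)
The core of the argument is the weak integration-by-parts identity \eqref{eq:divm} together with the change-of-variables bookkeeping for the set function $\nu(B)=\int_B g\dif\mu$. I will first establish that $\nu\in\mathrm{M}(\Omega)^N$ is a well-defined vector measure: since $g$ is bounded and $\mu\in\mathrm{M}(\Omega)^N$, the set function $B\mapsto\int_B g\dif\mu$ is countably additive with $|\nu|\leq(\sup|g|)|\mu|$, so $\nu$ has finite total variation. For part $\mathrm{(i)}$, the plan is to test $\nu$ against $\phi\in C_c^\infty(\Omega)$ and compute $\int_\Omega\nabla\phi\cdot\dif\nu=\int_\Omega g\,\nabla\phi\cdot\dif\mu$. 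The key algebraic step is the product rule $g\,\nabla\phi=\nabla(g\phi)-\phi\,\nabla g$; since $g\in C^1(\Omega)$ is bounded and $\phi$ has compact support, $g\phi\in C_c^1(\Omega)$ is an admissible (after mollification) test function for $\mu\in\mathrm{DM}(\Omega)$, giving $\int_\Omega\nabla(g\phi)\cdot\dif\mu=-\int_\Omega g\phi\dif\:\div\,\mu$. Combining these yields
\begin{equation*}
    \int_\Omega\nabla\phi\cdot\dif\nu=-\int_\Omega\phi\,g\dif\:\div\,\mu-\int_\Omega\phi\,\nabla g\cdot\dif\mu,
\end{equation*}
which is exactly the claimed formula for $\div\,\nu$ tested against $\phi$, and the integrability hypothesis $\nabla g\in L^1(\Omega,\mu)$ guarantees the last term defines a finite measure. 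Thus $\div\,\nu$ is a measure and $\nu\in\mathrm{DM}(\Omega)$.

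For part $\mathrm{(ii)}$, I start from the divergence formula just derived and examine when it is absolutely continuous with respect to $\mathcal{L}^N$. Writing the Lebesgue decomposition $\mu=F^\mu\mathcal{L}^N+\mu^s$, the term $\int_B\nabla g\cdot\dif\mu$ splits as $\int_B\nabla g\cdot F^\mu\dif\mathcal{L}^N+\int_B\nabla g\cdot\dif\mu^s$. The hypothesis $\nabla g=0$ on $\mathrm{supp}\,\mu^s$ kills the singular contribution, leaving $\int_B\nabla g\cdot F^\mu\dif x$, which is absolutely continuous precisely because $\nabla g\cdot F^\mu\in L^p(\Omega)$ by assumption. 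Since $\mu\in\mathrm{M}^p(\Omega;\div)$ already forces $g\,\div\,\mu\in L^p(\Omega)$ (as $g$ is bounded and $\div\,\mu\in L^p$), the full divergence $g\,\div\,\mu+\nabla g\cdot F^\mu$ lies in $L^p(\Omega)$, giving $\nu\in\mathrm{M}^p(\Omega;\div)$ with the stated density.

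Finally, for the boundary-condition refinement, I assume $\partial\Omega$ nonempty and $g\in\mathbf{C}_b^1(\overline\Omega)$, and must show that if $\mu$ has vanishing normal trace on $\Gamma$ then so does $\nu$. I will test $\mathrm{N}(\phi,\nu)$ against an arbitrary $\phi\in\mathbf{C}_b^1(\overline\Omega)$ with $\phi\vert_{\overline{\partial\Omega\setminus\Gamma}}=0$ and use the divergence formula to rewrite
\begin{equation*}
    \mathrm{N}(\phi,\nu)=\int_\Omega\nabla\phi\cdot\dif\nu+\int_\Omega\phi\dif\:\div\,\nu=\int_\Omega\nabla(g\phi)\cdot\dif\mu+\int_\Omega(g\phi)\dif\:\div\,\mu=\mathrm{N}(g\phi,\mu).
\end{equation*}
The decisive observation is that $g\phi$ is again an admissible test function of class $\mathbf{C}_b^1(\overline\Omega)$ vanishing on $\overline{\partial\Omega\setminus\Gamma}$ (since $\phi$ does and $g$ is bounded on $\overline\Omega$), so $\mathrm{N}(g\phi,\mu)=0$ by the membership $\mu\in\mathrm{DM}_\Gamma(\Omega)$, forcing $\mathrm{N}(\phi,\nu)=0$ and hence $\nu\in\mathrm{DM}_\Gamma(\Omega)$; the $\mathrm{M}^p_\Gamma$ case is identical once part $\mathrm{(ii)}$ is in hand. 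The main obstacle I anticipate is the approximation/mollification argument needed to legitimately use $g\phi$ (which is only $C^1$, not $C^\infty$) as a test function in \eqref{eq:divm}, and carefully tracking that $\nabla g\in L^1(\Omega,\mu)$ is exactly what makes the error terms in the product-rule manipulation vanish in the limit; this is where the integrability hypotheses must be invoked with care rather than taken for granted.
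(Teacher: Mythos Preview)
Your proposal is correct and follows essentially the same route as the paper: the paper also begins by upgrading the class of test functions from $C_c^\infty(\Omega)$ to $C_c^1(\Omega)$ via mollification (precisely the obstacle you flagged), then applies the product rule $g\nabla\phi=\nabla(g\phi)-\phi\nabla g$ with $g\phi\in C_c^1(\Omega)$ to derive the divergence formula, handles part $\mathrm{(ii)}$ through the Lebesgue decomposition and the vanishing of $\nabla g$ on $\mathrm{supp}\,\mu^s$, and for the $\Gamma$-refinement uses exactly your identity $\mathrm{N}(\phi,\nu)=\mathrm{N}(g\phi,\mu)$ together with the observation that $g\phi\in\mathbf{C}_b^1(\overline\Omega)$ still vanishes on $\overline{\partial\Omega\setminus\Gamma}$.
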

	
	\begin{proof} \label{repr}
	
	Note initially that since $g$ is continuous and bounded, we have that $\nu$ is in $\mathrm{M}(\Omega)^N$. Next, concerning the definition of $\mathrm{DM}(\Omega)$,  note that the test function $\phi$ in \eqref{eq:divm} within \Cref{def:DM}   can be exchanged from $C_c^\infty(\Omega)$ to $C_c^1(\Omega)$. Let $\mu\in \mathrm{DM}(\Omega)$ and let $\phi\in C_c^1(\Omega)$ be arbitrary. Further, let $\{\phi_n\}$ be a sequence in $C_c^\infty(\Omega)$  {  for which there exists a compact set $K$ for which $\mathrm{supp} (\phi_n)\subset K$ for all $n$, and} such that $\phi_n\to\phi$ { and $\nabla\phi_n\to\nabla\phi$ converge uniformly}; the existence of $\{\phi_n\}$ follows by  standard mollification techniques. Since $\mu\in\mathrm{DM}(\Omega)$ then
				\begin{equation*}
		\int_{\Omega} \nabla \phi_n \cdot \mathrm{d} \mu 
		= - \int_{\Omega} \phi_n \: \dif \div \mu.
				\end{equation*}
			{  Also, given that } $\nabla\phi_n\to\nabla\phi$ and $\phi_n\to\phi$ {  uniformly}, and $\phi_n,\nabla\phi_n\in C_c(\Omega)$ by taking the limit above we obtain
							\begin{equation*}
		\int_{\Omega} \nabla \phi \cdot \mathrm{d} \mu 
		= - \int_{\Omega} \phi \: \dif \div \mu.
				\end{equation*} 	
	{ Finally, } since 	$\phi\in C_c^1(\Omega)$ was arbitrary, we can consider test functions in this space.	
	
	Let $\phi \in C_c^1(\Omega)$ be arbitrary, then since $g\in C^1(\Omega)$, we observe that $g\nabla \phi =\nabla(g\phi)-\phi\nabla g$. Further, since $\nabla g\in L^1(\Omega,\mu)$, then $\phi \nabla g\in L^1(\Omega,\mu)$ and hence
	\begin{align*}
		\int_{\Omega} \nabla \phi \cdot \mathrm{d} \nu &=\int_{\Omega} g\nabla \phi \cdot \mathrm{d} \mu=\int_{\Omega} \nabla (g\phi) \cdot \mathrm{d} \mu-\int_{\Omega} \phi\nabla g \cdot \mathrm{d} \mu.
	\end{align*}
	Given that $g\phi \in C_c^1(\Omega)$ and $\mu\in\mathrm{DM}(\Omega)$, we have
	\begin{align*}
		\int_{\Omega} \nabla \phi \cdot \mathrm{d} \nu =-\int_{\Omega} \phi g \,\mathrm{d} \div \mu-\int_{\Omega} \phi\nabla g \cdot \mathrm{d} \mu,
	\end{align*}
		which proves (i).
		
		In order to prove (ii), consider  the Lebesgue decomposition of $\mu$ with respect to the $N$-dimensional Lebesgue measure $\mathcal{L}^N$, i.e., $\mu=F^\mu \mathcal{L}^N+\mu^s$. Since $\nabla g=0$ in $\mathrm{supp}\,\mu^s$ then
		\begin{equation*}
			\int_{\Omega} \phi\nabla g \cdot \mathrm{d} \mu=\int_{\Omega} \phi\nabla g \cdot F^\mu \mathrm{d} x.
		\end{equation*}
		Hence, if $\mu\in \mathrm{M}^p(\Omega;\div )$, then
			\begin{align*}
		\int_{\Omega} \nabla \phi \cdot \mathrm{d} \nu =-\int_{\Omega} \phi (g \, \div \mu +\nabla g \cdot F^\mu) \mathrm{d} x.
	\end{align*}
	If in addition, $g$ is bounded and $\nabla g \cdot F^\mu\in L^p(\Omega)$, then $\nu\in \mathrm{M}^p(\Omega;\div )$, and (ii) is proven.
	
	Let $g\in { \mathbf{C}_b^1}(\overline{\Omega})$ and $\phi\in { \mathbf{C}_b^1}(\overline{\Omega})$ { be} such that $\phi(x)=0$ for all $x\in  \overline{\partial \Omega \setminus \Gamma}$. It follows that $g\phi$ also vanishes at $\overline{\partial \Omega \setminus \Gamma}$. If $\mu\in \mathrm{DM}_\Gamma(\Omega)$, then
	\begin{equation*}
	\int_{\Omega} \nabla (g\phi) \cdot \mathrm{d} \mu=-\int_{\Omega} \phi g \,\mathrm{d} \div \mu,	
	\end{equation*}
	so that
	\begin{equation*}
		\mathrm{N}{ (\phi, \nu)}=\int_{\Omega} \phi \: \mathrm{d}  \div\nu+\int_{\Omega} \nabla \phi \cdot \mathrm{d} \nu=0.
	\end{equation*}
	Then, $\nu\in \mathrm{DM}_\Gamma(\Omega)$ given that $\phi\in { \mathbf{C}_b^1}(\overline{\Omega})$ with $\phi(x)=0$ for all $x\in  \overline{\partial \Omega \setminus \Gamma}$ was arbitrary. Further, if $\mu\in \mathrm{M}^p_\Gamma(\Omega;\div )$ and the assumptions of (ii) hold true, then $\nu\in \mathrm{M}^p_\Gamma(\Omega;\div )$. 

	\end{proof}

\begin{remark}
	It should be noted that $g\in { \mathbf{C}_b^1}(\overline{\Omega})$ and $\nabla g=0$ in  $\mathrm{supp} \mu^s$ where $\mu=F^\mu \mathcal{L}^N+\mu^s$ is sufficient for all the assumptions concerning $g$ in the previous theorem to hold true.
\end{remark}

{ The technical lemma that we introduced above 
allows us to prove the existence of ``recovery sequences''
for both $ \mathrm{DM}_\Gamma(\Omega)$ and $\mathrm{M}_\Gamma^p(\Omega ; \mathrm{div})$. Specifically, for a sequence $\{\alpha_n\}$, and $\alpha$  in $\mathrm{M}^+(\Omega)$, the Lebesgue decomposition (with respect to $\alpha$) leads to $\alpha_n= \alpha_n^a+\alpha_n^s$ and hence (almost) all conditions can be determined by regularity and convergence properties of the Radon-Nikodym $\{\frac{\dif \alpha_n^a}{\dif \alpha}\}$ as we show next in the main result of the paper. 
 }
	
	\begin{theorem} \label{recovery_sequence}
Suppose that $\{\alpha_n\}$ is a sequence in $\mathrm{M}^+(\Omega)$ with $\alpha\in \mathrm{M}^+(\Omega)$ as well. Assume that for the Lebesgue decomposition for $\alpha_n$ with respect to $\alpha$, given by
\begin{equation*}
	\alpha_n=g_n\alpha+\alpha_n^s,
\end{equation*}
we observe that $\alpha_n^s\to 0$ in $\mathrm{M}(\Omega)$ as $n\to\infty$. Further, for all $n\in\mathbb{N}$, $g_n \in C^1(\Omega)$, $g_n$ is bounded, $\nabla g_n\in L^1(\Omega,\alpha)$, and  
\begin{equation*}
	 \sup_{x\in\Omega}|g_n-1|\to 0,
\end{equation*}
as $n\to \infty$.
\begin{itemize}
	\item[$\mathrm{(i)}$] If 
	\begin{equation*}
	  \int_\Omega|\nabla g_n| \dif\alpha \to 0,
\end{equation*}
as $n\to\infty$, then, for $\mu\in K_0(\alpha)$ arbitrary, where 
\begin{equation*}
K_0(\alpha)=	\{\sigma \in \mathrm{DM}(\Omega) : |\sigma| \leq \alpha \},
\end{equation*}
there exists a sequence $\{\mu_n\}$ in $\mathrm{DM}(\Omega)$ such that $\mu_n\in K_0(\alpha_n)$ for $n\in\mathbb{N}$ and $\mu_n\to \mu$ in $\mathrm{DM}(\Omega)$ as $n\to\infty$. 

	\item[$\mathrm{(ii)}$] Suppose that, for each $n\in\mathbb{N}$, $\nabla g_n$ vanishes on $\mathrm{supp} \,\alpha^s$, the support of the measure $\alpha^s$ where 
	\begin{equation*}
		\alpha=F^ \alpha \mathcal{L}^N+\alpha^s
	\end{equation*}
	is the Lebesgue and Radon-Nikodym decomposition of $\alpha$. Let $\mu\in \mathbf{K}^p(\alpha; \mathrm{div})$ be arbitrary where 
\begin{equation*}
\mathbf{K}^p(\alpha; \mathrm{div})=	\{\sigma \in \mathrm{M}^p(\Omega;\div ) : |\sigma| \leq \alpha \},
\end{equation*}
for $1\leq p\leq +\infty$, and suppose that 
\begin{equation*}
	\||\nabla g_n| F^\alpha\|_{L^p(\Omega)} \to 0.
\end{equation*}
Then, there exists a sequence $\{\mu_n\}$ in $\mathrm{M}^p(\Omega;\div )$ such that $\mu_n\in \mathbf{K}^p(\alpha_n; \mathrm{div})$ for $n\in\mathbb{N}$ and $\mu_n\to \mu$ in $\mathrm{M}^p(\Omega;\div )$ as $n\to\infty$. 

\item[$(\mathrm{iii})$] If { $\partial\Omega$ is not empty}, and in $\mathrm{(i)}$ and $\mathrm{(ii)}$ we assume in addition that $g_n\in { \mathbf{C}_b^1}(\overline{\Omega})$, then their respective results hold true when exchanging $\mathrm{DM}(\Omega)$ by $\mathrm{DM}_\Gamma(\Omega)$ in the definition of $K_0$, and $\mathrm{M}^p(\Omega;\div)$  by $\mathrm{M}^p_\Gamma(\Omega;\div)$, i.e.,  by exchanging $\mathbf{K}^p(\alpha_n; \mathrm{div})$ by $\mathbf{K}_\Gamma^p(\alpha_n; \mathrm{div})$.
 
\end{itemize}

	\end{theorem}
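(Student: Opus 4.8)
The plan is to reuse the explicit construction from \Cref{thm:Recovery1}, namely the scaled measure $\mu_n := g_n\mu$ defined by $\mu_n(B) = \int_B g_n\,\mathrm{d}\mu$, and to upgrade the convergence from $\mathrm{M}(\Omega)^N$ to the stronger $\mathrm{DM}(\Omega)$ (resp. $\mathrm{M}^p(\Omega;\div)$) norm by controlling the divergences through \Cref{lem:DM}. First I would observe that the present hypotheses already force $\alpha_n\to\alpha$ strongly: writing $\alpha_n-\alpha = (g_n-1)\alpha + \alpha_n^s$ and noting that $(g_n-1)\alpha \ll \alpha$ while $\alpha_n^s \perp \alpha$, the mutual singularity together with \Cref{ezlemma} gives
\[
\|\alpha_n-\alpha\|_{\mathrm{M}(\Omega)} = \int_\Omega |g_n-1|\,\mathrm{d}\alpha + \alpha_n^s(\Omega) \le \big(\sup_\Omega|g_n-1|\big)\alpha(\Omega) + \alpha_n^s(\Omega) \to 0.
\]
Consequently \Cref{thm:Recovery1} applies and delivers, for any $\mu$ with $|\mu|\le\alpha$, that $\mu=F\alpha$ with $|F|\le 1$, that $\mu_n = (g_nF)\alpha$ satisfies $|\mu_n| = |F|g_n\alpha \le g_n\alpha \le \alpha_n$ (so the total-variation constraint $\mu_n\in\mathbf{K}(\alpha_n)$ holds), and that $\mu_n\to\mu$ in $\mathrm{M}(\Omega)^N$. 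It remains only to treat the divergence.

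For part $\mathrm{(i)}$ I would first note that since $|\mu|\le\alpha$ we have $\int_\Omega|\nabla g_n|\,\mathrm{d}|\mu| \le \int_\Omega|\nabla g_n|\,\mathrm{d}\alpha < \infty$, so $\nabla g_n\in L^1(\Omega,\mu)$ and \Cref{lem:DM}$\mathrm{(i)}$ applies, giving $\mu_n\in\mathrm{DM}(\Omega)$ with $\div\mu_n = g_n\,\div\mu + \nabla g_n\cdot\mu$. Subtracting $\div\mu$ and using the triangle inequality in $\mathrm{M}(\Omega)$,
\[
\|\div\mu_n-\div\mu\|_{\mathrm{M}(\Omega)} \le \int_\Omega |g_n-1|\,\mathrm{d}|\div\mu| + \int_\Omega |\nabla g_n|\,\mathrm{d}|\mu|,
\]
where the first term is bounded by $\big(\sup_\Omega|g_n-1|\big)|\div\mu|(\Omega)\to 0$ and the second by $\int_\Omega|\nabla g_n|\,\mathrm{d}\alpha\to 0$ by hypothesis. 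Combined with $\mu_n\to\mu$ in $\mathrm{M}(\Omega)^N$ this yields $\mu_n\to\mu$ in $\mathrm{DM}(\Omega)$.

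For part $\mathrm{(ii)}$ the extra work is verifying the hypotheses of \Cref{lem:DM}$\mathrm{(ii)}$, which requires comparing the Lebesgue decompositions of $\mu$ and $\alpha$ with respect to $\mathcal{L}^N$. From $|\mu|\le\alpha$ and \Cref{ezlemma} one gets that the absolutely continuous and singular parts are ordered, i.e. $|F^\mu|\le F^\alpha$ $\mathcal{L}^N$-a.e. and $|\mu^s|\le\alpha^s$; the latter gives $\mathrm{supp}\,\mu^s\subseteq\mathrm{supp}\,\alpha^s$, so the assumption that $\nabla g_n$ vanishes on $\mathrm{supp}\,\alpha^s$ forces $\nabla g_n=0$ on $\mathrm{supp}\,\mu^s$. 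Moreover $|\nabla g_n\cdot F^\mu|\le|\nabla g_n|F^\alpha\in L^p(\Omega)$, so \Cref{lem:DM}$\mathrm{(ii)}$ yields $\mu_n\in\mathrm{M}^p(\Omega;\div)$ with $\div\mu_n = g_n\,\div\mu + \nabla g_n\cdot F^\mu$, and then
\[
\|\div\mu_n-\div\mu\|_{L^p(\Omega)} \le \big(\sup_\Omega|g_n-1|\big)\|\div\mu\|_{L^p(\Omega)} + \big\||\nabla g_n|F^\alpha\big\|_{L^p(\Omega)} \to 0,
\]
which with the $\mathrm{M}(\Omega)^N$ convergence gives $\mu_n\to\mu$ in $\mathrm{M}^p(\Omega;\div)$.

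Finally, part $\mathrm{(iii)}$ is immediate from the boundary clause of \Cref{lem:DM}: when $g_n\in\mathbf{C}_b^1(\overline{\Omega})$ and $\mu$ lies in $\mathrm{DM}_\Gamma(\Omega)$ (resp. $\mathrm{M}^p_\Gamma(\Omega;\div)$), that lemma guarantees $\mu_n=g_n\mu$ remains in the same $\Gamma$-space, so the recovery sequence respects the boundary condition while the norm estimates above are unchanged. I expect the only genuinely delicate step to be the measure-theoretic comparison in $\mathrm{(ii)}$, namely extracting the pointwise domination $|F^\mu|\le F^\alpha$ and the support inclusion $\mathrm{supp}\,\mu^s\subseteq\mathrm{supp}\,\alpha^s$ from $|\mu|\le\alpha$; everything else reduces to the two cited results and elementary estimates.
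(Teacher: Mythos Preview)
Your proposal is correct and follows essentially the same route as the paper's proof: both define the recovery sequence by $\mu_n = g_n\mu$, invoke \Cref{thm:Recovery1} for the $\mathrm{M}(\Omega)^N$ convergence and constraint, and then apply \Cref{lem:DM} together with the same two-term estimate on $\div\mu_n - \div\mu$ to obtain convergence in $\mathrm{DM}(\Omega)$ and $\mathrm{M}^p(\Omega;\div)$ respectively. The paper phrases the $\mathrm{DM}$ estimate via duality with $C_c(\Omega)$ test functions rather than a direct total-variation bound, and for $\mathrm{(ii)}$ argues $\mu^s \ll \alpha^s$ rather than the support inclusion, but these are cosmetic differences.
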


	\begin{proof}
	Given that the Lebesgue and Radon-Nikodym decomposition for $\alpha_n$ with respect to $\alpha$, and determined by
$\alpha_n=g_n\alpha+\alpha_n^s,$ satisfies $\alpha_n^s\to 0$ in $\mathrm{M}(\Omega)$, and $\sup_{x\in\Omega}|g_n(x)-1|\to 0$ as $n\to\infty$, we initially observe that
	\begin{equation*}
		\alpha_n\to\alpha \quad \text{in } \mathrm{M}(\Omega),
	\end{equation*}
	as $n\to\infty$. Hence, the conclusion of \Cref{thm:Recovery1} holds true; in particular by the construction of its proof: For arbitrary $\mu\in K_0(\alpha)$ we define
		\begin{equation*}
			\mu_n(B)=\int_{B}g_n\dif\mu,
		\end{equation*}
		where $B$ is a Borel subset of $\Omega$. For each $n\in\mathbb{N}$, $\mu_n$ is well-defined given that $g_n$ is continuous and bounded, and $\mu\ll \alpha$ since $|\mu|\leq \alpha$. It follows that $|\mu_n|\leq \alpha_n$, i.e., $\mu_n\in K_0(\alpha_n)$ for $n\in\mathbb{N}$, and $\mu_n\to \mu$ in $\mathrm{M}(\Omega)^N$ as $n\to\infty$.
		
		Further, since $g_n$ is in $C^1(\Omega)$, it is bounded, and $\nabla g_n\in L^1(\Omega,\alpha)$, and hence in $\nabla g_n\in L^1(\Omega,\mu)$, by \Cref{lem:DM} we have that $\mu_n\in \mathrm{DM}(\Omega)$ and also
				\begin{equation*}
			\div\: \mu_n (B)= \int_{  B} g_n \:\dif \div\,\mu + \int_{  B} \nabla g_n \cdot\dif \mu.
		\end{equation*}
		Thus for an arbitrary $\varphi\in C_c(\Omega)$ with $|\varphi|\leq 1$, we have
		\begin{align*}
			|\langle\div\: \mu_n-\div\: \mu, \varphi\rangle_{\mathrm{M}(\Omega),C_c(\Omega)}|&=\left|\int_{\Omega} \varphi(g_n-1) \:\dif \div\,\mu + \int_{\Omega} \varphi \nabla g_n \cdot\dif \mu\right|\\
			&\leq |\div\,\mu(\Omega)| \left(\sup_{x\in\Omega}|g_n(x)-1|\right)+\int_\Omega|\nabla g_n| \dif\alpha,
		\end{align*}
		where we have used that $|\mu|\leq\alpha$. By taking the supremum over all $\varphi\in C_c(\Omega)$ with $|\varphi|\leq 1$, and subsequently taking the limit as $n\to\infty$ we observe that
		\begin{equation*}
			\lim_{n\to\infty} |\div\: \mu_n-\div\: \mu|(\Omega)=0,
		\end{equation*}
		or equivalently $\div\: \mu_n\to\div\: \mu$ in $\mathrm{M}(\Omega)$, and hence $\mu_n\to \mu$ in $\mathrm{DM}(\Omega)$ as $n\to\infty$.

		We focus on (ii) next. Since $\nabla g_n=0$ in $\mathrm{supp} \,\alpha^s$, the support of the measure $\alpha^s$, then we claim that 
		\begin{equation*}
			 \int_{  B} \nabla g_n \cdot\dif \mu =  \int_{  B} \nabla g_n \cdot F^\mu\dif x,
		\end{equation*}
		{  for all Borel sets $B$},
		where $\mu=F^\mu \mathcal{L}^N+\mu^s$. Since $\mu\in \mathbf{K}^p(\alpha; \mathrm{div})$, then $|\mu|\leq \alpha$ which implies that
		\begin{equation*}
			|F^\mu|\leq F^\alpha  \qquad \text{ and }\qquad|\mu^s|\leq \alpha^s,
		\end{equation*}
		where the first inequality holds pointwise a.e. with respect to the Lebesgue measure and the second one in  measure sense. In particular, the latter implies that $\mu^s\ll \alpha^s$ so that $\nabla g_n=0$ in $\mathrm{supp} \,\mu^s$ as well. Hence, $\int_{  B} \nabla g_n \cdot\dif \mu^s=0$ which proves the claim. Further,
		
		\begin{align*}
			\|\div\: \mu_n-\div\: \mu\|_{L^p(\Omega)}&\leq\||(g_n-1) \: \div\,\mu\|_{L^p(\Omega)}+\|\nabla g_n \cdot F^\mu\|_{L^p(\Omega)}\\
			&\leq \left(\sup_{x\in\Omega}|g_n(x)-1|\right)\|\div\,\mu\|_{L^p(\Omega)}+\||\nabla g_n| F^\alpha\|_{L^p(\Omega)},
		\end{align*}
		where we have used that $|\nabla g_n \cdot F^\mu|\leq |\nabla g_n | F^\alpha$. Therefore,
		
				\begin{equation*}
			\lim_{n\to\infty} \|\div\: \mu_n-\div\: \mu\|_{L^p(\Omega)}=0,
		\end{equation*}
		and thus  $\mu_n\to \mu$ in $\mathrm{M}^p(\Omega;\div )$ as $n\to\infty$ and the result is proven.
		
		Finally, we consider on (iii).  Since \Cref{lem:DM} holds for $\mathrm{DM}_{\Gamma}(\Omega)$ 
		and $\mathrm{M}_{\Gamma}(\Omega ; \mathrm{div})$ provided 
		$g_n \in { \mathbf{C}_b^1}(\overline{\Omega})$, conditions (i) and (ii) of 
		\Cref{recovery_sequence} also hold for $g_n \in { \mathbf{C}_b^1}(\overline{\Omega})$.
		
	\end{proof}

\begin{remark}
	It should be noted that sufficient conditions for all instances in the theorem above for the sequence  of functions $\{g_n\}$ are that $g_n\in { \mathbf{C}_b^1}(\overline{\Omega})$ for $n\in \mathbb{N}$, $g_n$ is constant on a neighborhood of $\mathrm{supp} \,\alpha^s$,  and that 
	\begin{equation*}
			 \sup_{x\in\Omega}|g_n(x)-1|\to 0, \qquad \text{and} \qquad \sup_{x\in\Omega}|\nabla g_n(x)|\to 0,
	\end{equation*}
	both as $n\to\infty$.
\end{remark}
	
	\section{Application to optimization problems }

In this section we apply the results of the previous one to optimization problems that arise in applications as described in \Cref{sec:formal}. Consider the following optimization problem over the space $\mathrm{M}_{\Gamma}^p(\Omega ; \mathrm{div})$ with total variation constraints:
	 
		\begin{equation} \label{eq:P} \tag{$\mathbb{P}$}
			\begin{aligned}
			\min_{\mu} \quad & \mathcal{J}(\mu) := \frac{1}{p} \int_{\Omega} \left| \mathrm{div}\, \mu(x) - f(x)\right|^p \, \mathrm{d} x   
			+ \int_{\Omega} \beta(x) \, \mathrm{d}|\mu|(x)\\
			\textrm{s.t.} \quad & \mu \in \mathrm{M}_{\Gamma}^p(\Omega ; \mathrm{div})\\
			&|\mu| \le \alpha 
			\end{aligned}
		\end{equation}
	for $\alpha \in \mathrm{M}^{+}(\Omega)$, $f \in L^p(\Omega)$, $1 < p <  \infty$, and 
	and $\beta$ a non-negative continuous and bounded function. In cases where we need to study the dependence of the problem with respect to $\alpha$, 
	we use the notation $\mathbb{P}(\alpha)$. Further note that the problem can be written as $\min \mathcal{J}(\mu)$  subject to $\mu\in \mathbf{K}^p_{\Gamma}(\alpha; \mathrm{div})$.
	\begin{theorem} \label{optimize_static}
		The problem \eqref{eq:P} admits solutions.
	\end{theorem}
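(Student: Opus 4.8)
The plan is to apply the direct method of the calculus of variations. First I would observe that $\mathcal{J}(\mu)\ge 0$ for every $\mu$ and that the feasible set $\mathbf{K}^p_{\Gamma}(\alpha;\mathrm{div})$ is non-empty, since $0$ belongs to it; hence $m:=\inf \mathcal{J}$ is finite with $0\le m\le \mathcal{J}(0)=\frac1p\int_\Omega |f|^p\,\mathrm{d}x$. I would then fix a minimizing sequence $\{\mu_n\}\subset \mathbf{K}^p_{\Gamma}(\alpha;\mathrm{div})$ with $\mathcal{J}(\mu_n)\to m$ and extract uniform bounds. From $|\mu_n|\le\alpha$ I immediately get $|\mu_n|(\Omega)\le \alpha(\Omega)<\infty$, so $\{\mu_n\}$ is bounded in $\mathrm{M}(\Omega)^N$. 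From $\mathcal{J}(\mu_n)\ge \frac1p\|\mathrm{div}\,\mu_n-f\|_{L^p(\Omega)}^p$, the boundedness of $\mathcal{J}(\mu_n)$, and $f\in L^p(\Omega)$, I also obtain $\sup_n\|\mathrm{div}\,\mu_n\|_{L^p(\Omega)}<\infty$.

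These are exactly the hypotheses of \Cref{basic_extension_to_div}, applied to the constant sequence $\alpha_n\equiv\alpha$, which trivially converges strongly to $\alpha$ in $\mathrm{M}(\Omega)$. Invoking it produces a subsequence (not relabelled) and a measure $\mu^\ast\in \mathbf{K}^p_{\Gamma}(\alpha;\mathrm{div})$ such that $\mu_n\xrightarrow{\mathrm{nw}}\mu^\ast$ in $\mathrm{M}(\Omega)^N$ and $\mathrm{div}\,\mu_n\rightharpoonup \mathrm{div}\,\mu^\ast$ in $L^p(\Omega)$. The decisive point is feasibility of the limit: the corollary guarantees not merely a narrow limit but that $\mu^\ast$ still satisfies $|\mu^\ast|\le\alpha$, has divergence in $L^p(\Omega)$, and retains the vanishing normal-trace condition $\mathrm{N}(\phi,\mu^\ast)=0$, so that $\mu^\ast$ is admissible for \eqref{eq:P}.

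It then remains to establish lower semicontinuity of $\mathcal{J}$ along this subsequence. For the fidelity term, the weak convergence $\mathrm{div}\,\mu_n\rightharpoonup\mathrm{div}\,\mu^\ast$ in $L^p(\Omega)$ gives $\mathrm{div}\,\mu_n-f\rightharpoonup \mathrm{div}\,\mu^\ast-f$, and by convexity of $t\mapsto |t|^p$ the $L^p$ norm is weakly lower semicontinuous, so that $\int_\Omega|\mathrm{div}\,\mu^\ast-f|^p\,\mathrm{d}x\le \liminf_n\int_\Omega|\mathrm{div}\,\mu_n-f|^p\,\mathrm{d}x$. For the penalty term, I would use the dual representation of the weighted total variation,
\[
\int_\Omega \beta\,\mathrm{d}|\mu| = \sup\Big\{ \int_\Omega \phi\cdot\mathrm{d}\mu : \phi\in C_c(\Omega)^N,\ |\phi(x)|\le\beta(x)\ \ \forall x\in\Omega \Big\},
\]
which is the natural analogue of \eqref{eq:TVmu}; since $\beta\ge0$ is continuous and bounded, each functional $\mu\mapsto\int_\Omega\phi\cdot\mathrm{d}\mu$ with $\phi\in C_c(\Omega)^N$ is continuous under weak (hence narrow) convergence, so their supremum is narrowly lower semicontinuous and $\int_\Omega\beta\,\mathrm{d}|\mu^\ast|\le\liminf_n\int_\Omega\beta\,\mathrm{d}|\mu_n|$. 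Adding the two inequalities yields $\mathcal{J}(\mu^\ast)\le\liminf_n\mathcal{J}(\mu_n)=m$, and since feasibility of $\mu^\ast$ forces $\mathcal{J}(\mu^\ast)\ge m$, I conclude $\mathcal{J}(\mu^\ast)=m$, so $\mu^\ast$ solves \eqref{eq:P}.

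I expect the main obstacle to be the feasibility of the weak limit, not coercivity, which is immediate from the constraint $|\mu_n|\le\alpha$. The subtlety is that only weak/narrow compactness is available, so one must ensure the narrow limit remains inside the divergence-constrained space \emph{and} preserves the boundary condition; this is precisely what \Cref{basic_extension_to_div} delivers. The only other delicate ingredient is the narrow lower semicontinuity of the weighted total variation, which the dual formula above supplies cleanly.
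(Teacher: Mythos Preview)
Your proof is correct and follows essentially the same route as the paper: the direct method, with compactness and feasibility of the limit obtained from \Cref{basic_extension_to_div} (applied with the constant sequence $\alpha_n\equiv\alpha$), and lower semicontinuity of each term in $\mathcal{J}$. The only cosmetic difference is in the $\beta$-term: the paper argues via $\beta\mu_n\xrightarrow{\mathrm{nw}}\beta\mu^*$ and lower semicontinuity of the total variation, whereas you invoke the dual representation of the weighted total variation directly---both arguments are standard and equivalent here.
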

	\begin{proof} 
		We follow the direct method. The functional $\mathcal{J}$ is bounded 
		from below and $\mathbf{K}^p_{\Gamma}( \alpha ; \mathrm{div})$ is non-empty, so choose an infimizing sequence $\{ \mu_n \} _ { n = 1 } ^ { \infty }$
		with $\mu_n \in \mathbf{K}^p_{\Gamma}( \alpha ; \mathrm{div})$ for $n\in\mathbb{N}$ such that 
			\begin{equation*}
				\lim_{n\to\infty}\mathcal{J}( \mu_n ) \to M = \inf \mathcal{J}(\mu) \quad \text{s.t.} \:\:\mu \in  \mathbf{K}^p_{\Gamma}( \alpha ; \mathrm{div}).
			\end{equation*} 
		Since $\{\| \mathrm{div} \mu_n \|_{L^p(\Omega)}\}_{n=1}^{\infty}$ is bounded due 
		to the structure of $\mathcal{J}$ and $|\mu_n| \le \alpha $ 
		for every $n$, it follows from \Cref{basic_extension_to_div} that 
		there is some $\mu^* \in \mathbf{K}^p_{\Gamma}(\alpha ; \mathrm{div})$ 
		for which $\mu_{n}\xrightarrow[]{\textrm{nw}}\mu^*$ 
		and $\mathrm{div} \mu_n \rightharpoonup \mathrm{div} \mu^*$ in $L^p(\Omega)$
		along a subsequence (not relabelled) as $n \to \infty$.  We claim that
                
			\begin{equation*}
				\mathcal{J}(\mu^*) \le \liminf_{n} \mathcal{J}(\mu_{n} ) 
			\quad \text{for} \quad \mu_{n} \rightharpoonup \mu^* .
			\end{equation*}
		Since $\mu_{n}\xrightarrow[]{\textrm{nw}}\mu^*$ then we also have that $\nu_{n}\xrightarrow[]{\textrm{nw}}\nu^*$ where $\nu_n:=\beta\mu_n$ with $n\in \mathbb{N}$ and $\nu^*=\beta\mu^*$, that is
		\begin{equation*}
			\nu_n(B)=\int_B\beta\dif\mu_n, \quad\text{for}\quad n\in\mathbb{N} \qquad \text{and} \qquad \nu^*(B)=\int_B\beta\dif\mu^*,
		\end{equation*}
		for any Borel set $B\subset \Omega$. Since $|\nu_n|=\beta |\mu_n|$ it follows by { 
		Corollary 4.2.1. in \cite{abm14}} that 
			\begin{equation*}
				\int_{\Omega} \beta \, \mathrm{d}|\mu^*| 
				\le \liminf_{n}  \int_{\Omega} \beta \, \mathrm{d}|\mu_{n}|.			\end{equation*} 
		Next, observe that the functional 
			\begin{equation*}
				L^p(\Omega)\ni v\mapsto \frac{1}{p} \int_{\Omega} \left| v - f \right|^p \, \mathrm{d} x  
			\end{equation*}
		is weakly lower semicontinuous given that it is both continuous and 
		convex. Therefore,
		\begin{equation*}
			M \le \mathcal{J}(\mu^*) \le \liminf_{n} \mathcal{J}(\mu_{n} ) 
		\end{equation*}
		Since $\liminf_n \mathcal{J}(\mu_n ) = M$, 
		it follows that $\mu^*$ minimizes $\mathcal{J}$. 
	\end{proof}

Now we are in a position to address a stability result associated with solutions to  $\mathbb{P}(\alpha)$ with respect to perturbations of $\alpha$. In particular, the result hinges on both forward and backward results associated to the convergence of $\alpha\mapsto \mathbf{K}^p_{\Gamma}(\alpha ; \mathrm{div})$.

	\begin{theorem}\label{optimize_dynamic}
		Let $\{\alpha_n\}$ be a sequence measures in $\mathrm{M}^{+}(\Omega)$ that converges to $\alpha \in \mathrm{M}^{+}(\Omega)$
		in norm and satisfies the conditions of $(\mathrm{iii})$ in \Cref{recovery_sequence}.
		For each $\alpha_n$, a solution $\mu_n$ to the problem 
		$\mathbb{P}(\alpha_n)$ exists for which
		\begin{equation*}
			\mu_n \stackrel{\mathrm{nw}}{\longrightarrow}
		\mu^* \qquad\text{and}\qquad  \mathrm{div}\, \mu_n \rightharpoonup \mathrm{div} \, \mu^* \:\text{ in }\: L^p(\Omega),
		\end{equation*}
		along a subsequence (not relabeled) as $n\to\infty$ for some $\mu^* \in \mathbf{K}^p_{\Gamma}(\alpha ; \mathrm{div})$ that solves $\mathbb{P}(\alpha)$.

	\end{theorem}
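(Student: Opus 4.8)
The plan is the standard two-sided stability argument for minimizers under set convergence, combining the forward compactness of \Cref{basic_extension_to_div} with the backward recovery of \Cref{recovery_sequence}. First I would invoke \Cref{optimize_static} to fix a minimizer $\mu_n$ of $\mathbb{P}(\alpha_n)$ for each $n$, and then establish uniform a priori bounds. The total-variation bound $\sup_n|\mu_n|(\Omega)<\infty$ follows from $|\mu_n|\le\alpha_n$ together with the boundedness of $\{\alpha_n(\Omega)\}$, itself a consequence of $\alpha_n\to\alpha$ in norm. To bound the divergence, I would compare $\mu_n$ with the feasible competitor $0\in\mathbf{K}^p_\Gamma(\alpha_n;\mathrm{div})$; since $\beta\ge0$,
\begin{equation*}
\frac{1}{p}\|\mathrm{div}\,\mu_n-f\|_{L^p(\Omega)}^p\le\mathcal{J}(\mu_n)\le\mathcal{J}(0)=\frac{1}{p}\|f\|_{L^p(\Omega)}^p,
\end{equation*}
which gives $\sup_n\|\mathrm{div}\,\mu_n\|_{L^p(\Omega)}\le2\|f\|_{L^p(\Omega)}$.

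With the divergence bound secured, I would apply \Cref{basic_extension_to_div} to pass to a subsequence (not relabeled) along which $\mu_n\xrightarrow[]{\mathrm{nw}}\mu^*$ and $\mathrm{div}\,\mu_n\rightharpoonup\mathrm{div}\,\mu^*$ in $L^p(\Omega)$, with $\mu^*\in\mathbf{K}^p_\Gamma(\alpha;\mathrm{div})$. The lower semicontinuity $\mathcal{J}(\mu^*)\le\liminf_n\mathcal{J}(\mu_n)$ is then obtained exactly as in the proof of \Cref{optimize_static}: narrow convergence of $\nu_n:=\beta\mu_n$ to $\beta\mu^*$ together with \cite[Corollary 4.2.1]{abm14} handles the weighted variation term, while convexity and continuity of $v\mapsto\frac1p\int_\Omega|v-f|^p\,\mathrm{d}x$ handle the fidelity term via weak lower semicontinuity on $L^p(\Omega)$.

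The decisive step is to upgrade $\mu^*$ from a feasible limit to a genuine minimizer of $\mathbb{P}(\alpha)$, and this is where the recovery sequence enters. Given an arbitrary competitor $\tilde\mu\in\mathbf{K}^p_\Gamma(\alpha;\mathrm{div})$, the hypotheses of \Cref{recovery_sequence}(iii) (assumed to hold for $\{\alpha_n\}$) furnish $\tilde\mu_n\in\mathbf{K}^p_\Gamma(\alpha_n;\mathrm{div})$ with $\tilde\mu_n\to\tilde\mu$ strongly in $\mathrm{M}^p(\Omega;\mathrm{div})$, i.e.\ in norm together with $\mathrm{div}\,\tilde\mu_n\to\mathrm{div}\,\tilde\mu$ in $L^p(\Omega)$. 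Minimality of $\mu_n$ over $\mathbf{K}^p_\Gamma(\alpha_n;\mathrm{div})$ then gives $\mathcal{J}(\mu_n)\le\mathcal{J}(\tilde\mu_n)$. I would verify that $\mathcal{J}$ is continuous along this strongly convergent sequence: the fidelity term converges because $\mathrm{div}\,\tilde\mu_n\to\mathrm{div}\,\tilde\mu$ in $L^p(\Omega)$, and the weighted variation term is controlled through the triangle inequality $|\tilde\mu_n|\le|\tilde\mu|+|\tilde\mu_n-\tilde\mu|$ by $\bigl|\int_\Omega\beta\,\mathrm{d}|\tilde\mu_n|-\int_\Omega\beta\,\mathrm{d}|\tilde\mu|\bigr|\le\big(\sup_\Omega\beta\big)\,|\tilde\mu_n-\tilde\mu|(\Omega)\to0$. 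Chaining the estimates,
\begin{equation*}
\mathcal{J}(\mu^*)\le\liminf_n\mathcal{J}(\mu_n)\le\lim_n\mathcal{J}(\tilde\mu_n)=\mathcal{J}(\tilde\mu),
\end{equation*}
and since $\tilde\mu\in\mathbf{K}^p_\Gamma(\alpha;\mathrm{div})$ was arbitrary, $\mu^*$ solves $\mathbb{P}(\alpha)$.

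The main obstacle I anticipate is this backward step: one must confirm that the strong $\mathrm{M}^p(\Omega;\mathrm{div})$ convergence produced by \Cref{recovery_sequence}(iii) controls \emph{both} terms of $\mathcal{J}$ simultaneously, so that $\mathcal{J}(\tilde\mu_n)\to\mathcal{J}(\tilde\mu)$, and that the recovery hypotheses genuinely transfer to the $\Gamma$-constrained setting. By contrast, the forward compactness and the lower-semicontinuity estimate are essentially a rerun of \Cref{optimize_static}.
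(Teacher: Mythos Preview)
Your proposal is correct and follows essentially the same strategy as the paper's proof: forward compactness via \Cref{basic_extension_to_div}, lower semicontinuity of $\mathcal{J}$ as in \Cref{optimize_static}, and the recovery sequence from \Cref{recovery_sequence}(iii) to compare against an arbitrary competitor. You are in fact slightly more careful than the paper, which invokes \Cref{basic_extension_to_div} without explicitly verifying its hypothesis $\sup_n\|\mathrm{div}\,\mu_n\|_{L^p(\Omega)}<\infty$; your comparison with the zero competitor supplies exactly this bound, and your explicit check that $\mathcal{J}(\tilde\mu_n)\to\mathcal{J}(\tilde\mu)$ spells out what the paper merely calls ``continuity of $\mathcal{J}$''.
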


	\begin{proof}
		By \Cref{optimize_static} each problem $\mathbb{P}(\alpha_n)$
		has a solution $\mu_n \in \mathbf{K}^p_{\Gamma}(\alpha_n ; \mathrm{div})$. It then follows from  \Cref{basic_extension_to_div} that $\mu_n \stackrel{\mathrm{nw}}{\longrightarrow}
		\mu^*$ and $\mathrm{div}\, \mu_n \rightharpoonup \mathrm{div} \, \mu^*$ in $L^p(\Omega)$ along a subsequence
		for some measure $\mu^* \in \mathbf{K}^p_{\Gamma}(\alpha ; \mathrm{div})$. 
		
		We now show  that $\mu^*$ solves $\mathbb{P}(\alpha)$. Let $\nu \in \mathbf{K}^p_{\Gamma}(\alpha ; \mathrm{div})$ be arbitrary. 
		Since we assumed that the sequence $\{\alpha_n\}_{n =1 }^{\infty}$ satisfies the assumptions required to apply \Cref{recovery_sequence}, there exists 
		a sequence $\{\nu_n\}_{n=1}^{\infty}$ with $\nu_n \in \mathbf{K}^p_{\Gamma}(\alpha ; \mathrm{div})$ such that $\nu_n \to \nu $ in $\mathrm{M}^p_\Gamma(\Omega;\div)$ as $n \to \infty$.
		Exploiting that $\mu_n$ is a minimizer to $\mathbb{P}(\alpha_n)$, we observe
		\begin{equation*}
			\mathcal{J}(\mu_n) \le \mathcal{J}(\nu_n)
		\end{equation*}
		for all indices $n$. It then follows from lower semicontinuity of $\mathcal{J}$ for $\mu_n \stackrel{\mathrm{nw}}{\longrightarrow}
		\mu^*$ and $\mathrm{div}\, \mu_n \rightharpoonup \mathrm{div} \, \mu^*$ in $L^p(\Omega)$,
		and the continuity of $\mathcal{J}$ for  $\nu_n \to \nu $ in $\mathrm{M}^p_\Gamma(\Omega;\div)$
		that 
		\begin{equation*}
			\mathcal{J}(\mu^*)\leq \liminf_n \mathcal{J}(\mu_n) \leq \liminf_n \mathcal{J}(\nu_n) = \lim_n \mathcal{J}(\nu_n)= \mathcal{J}(\nu)
		\end{equation*}
		as $n \to \infty$. Since $\nu\in \mathbf{K}^p_{\Gamma}(\alpha ; \mathrm{div})$ was arbitrary, $\mu^*$ is a minimizer 
		for $\mathcal{J}$ and, as a result, solves $\mathbb{P}(\alpha)$.
	\end{proof}

\section{Conclusion}

We have developed several set convergence results associated to spaces of measures that include measures with divergences (functional or measure-valued) and directionally homogeneous boundary conditions. Further, we have provided the first stability results for optimization problems including such spaces.

	\bibliographystyle{abbrv}
	\bibliography{Chisholm-Rautenberg_Measures}

\end{document}